\newtheorem{theorem}{Theorem}[section]
\newtheorem{proposition}[theorem]{Proposition}
\newtheorem{lemma}[theorem]{Lemma}
\theoremstyle{definition}
\newtheorem{question}[theorem]{Question}
\newtheorem{remark}[theorem]{Remark}
\def\cocoa{{\hbox{\rm C\kern-.13em o\kern-.07em C\kern-.13em o\kern-.15em A}}}
\begin{document}

\title{Balanced subdivisions and flips on surfaces}

\author{Satoshi Murai}
\address{
Satoshi Murai,
Department of Pure and Applied Mathematics,
Graduate School of Information Science and Technology,
Osaka University,
Suita, Osaka, 565-0871, Japan}
\email{s-murai@ist.osaka-u.ac.jp
}

\author{Yusuke Suzuki}
\address{
Department of Mathematics, Niigata University, 
8050 Ikarashi 2-no-cho, Nishi-ku, Niigata, 950-2181, Japan.
}
\email{y-suzuki@math.sc.niigata-u.ac.jp}


\begin{abstract}
In this paper, we show that two balanced triangulations of a closed surface 
are not necessary connected by a sequence of balanced stellar 
subdivisions and welds.
This answers a question posed by Izmestiev, Klee and Novik. 
We also show that two balanced triangulations of a closed surface 
are connected by a sequence of three local operations, 
which we call the pentagon 
contraction, the balanced edge subdivision and the balanced edge weld. 
In addition, we prove that two balanced triangulations of the $2$-sphere are 
connected by a sequence of pentagon contractions and their inverses 
if none of them are octahedral spheres. 
\end{abstract}

\maketitle

\section{Introduction}
It is a classical result in the combinatorial topology \cite{Al} which shows that two PL-homeomorphic simplicial complexes are connected by a sequence of stellar subdivisions and their inverses.
A closely related result is Pachner's result \cite{Pa1,Pa2} which shows that two PL-homeomorphic combinatorial manifolds are connected by a sequence of bistellar flips (see also \cite{Li} for the proofs of both results).
A combinatorial $d$-manifold is a triangulation of a $d$-manifold all whose vertex links are PL $(d-1)$-spheres. A combinatorial $d$-manifold is said to be {\em balanced} if its graph is $(d+1)$-colorable.
Recently, Izmestiev, Klee and Novik \cite{IKN} proved an analogue of Pachner's result for balanced combinatorial manifolds.
They introduced a version of bistellar flips that preserves the balanced property, which they call \textit{cross-flips}, and proved that two PL-homeomorphic balanced combinatorial manifolds are connected by a sequence of cross-flips.
In this paper, we study the following questions related to their result in the special case of triangulated surfaces.
\begin{itemize}
\item
There is an analogue of stellar subdivisions for balanced simplicial complexes, called {\em balanced stellar subdivisions} (see \cite[\S 2.5]{IKN}).
Are two PL-homeomorphic combinatorial manifolds connected by a sequence of balanced stellar subdivisions and their inverses?
\item
It is known that not all cross-flips are necessary to connect any two PL-homeomorphic balanced combinatorial manifolds. How many different types of cross-flips are indeed necessary?
\end{itemize}

A {\em triangulation} $G$ of a closed surface $F^2$ is a simple graph 
embedded on the surface such that each face of $G$ is bounded by a $3$-cycle and any two faces share at most one edge.
By a result of Izmestiev, Klee and Novik \cite[Theorem 1.1]{IKN}, two different balanced 
triangulations of a 
fixed closed surface are connected by a sequence of cross-flips.
A cross-flip in dimension $d$ is an operation that exchanges a 
shellable and co-shellable $d$-ball in the boundary of the cross 
$(d+1)$-polytope with its complement 
(see \cite{IKN} for the precise definition).
In dimension $2$, there are $9$ different types of cross-flips, but it is known that only $6$ flips, described in Figure~\ref{fig:6defo}, are necessary (see \cite[Remark 3.9]{IKN}).
Note that, in Figure~\ref{fig:6defo},
it is not allowed to make a double edge by the operations and each triangle must be a face.
In this paper, we call these six operations,
a balanced triangle subdivision (BT-subdivision or BTS), 
a balanced triangle weld (BT-weld or BTW), 
a balanced edge subdivision (BE-subdivision or BES), 
a balanced edge weld (BE-weld or BEW), 
a pentagon splitting (P-splitting or PS) and 
a pentagon contraction (P-contraction or PC). 
A BT-subdivision (resp., -weld) and 
a BE-subdivision (resp., -weld) are collectively referred to as 
{\em balanced subdivisions\/} (resp., -{\em welds\/}). 
Izmestiev, Klee and Novik \cite[Problem 3]{IKN} asked if balanced subdivisions and balanced welds suffice to transform any balanced triangulation of a closed surface into any other balanced triangulation of the same surface.
We answer this question.

\begin{figure}
\begin{center}
\input{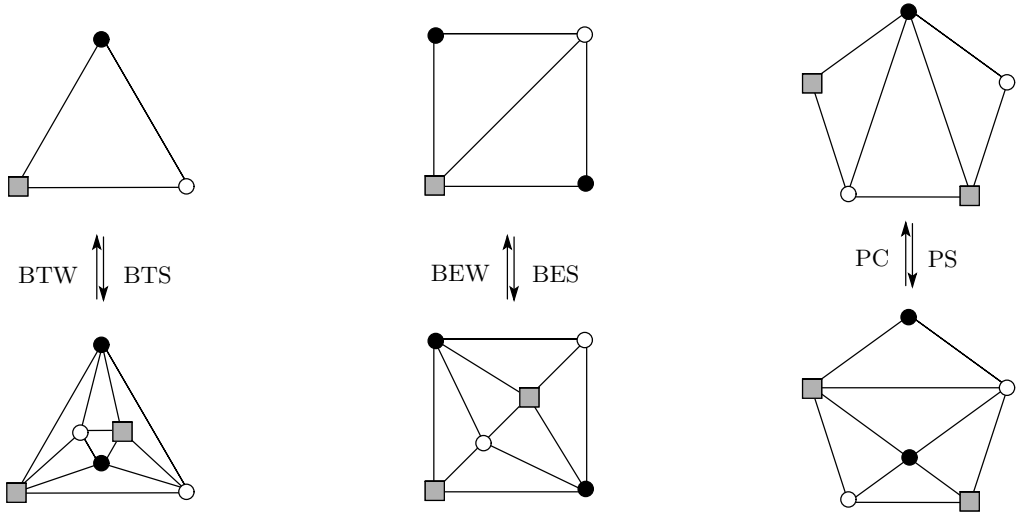}
\end{center}
\caption{Six necessary cross-flips.}
\label{fig:6defo}
\end{figure}

\begin{theorem}\label{thm1}
For every closed surface $F^2$, there are balanced 
triangulations $G$ and $G'$ of 
$F^2$ such that $G'$ cannot be obtained from $G$ by a 
sequence of balanced subdivisions and welds. 
\end{theorem}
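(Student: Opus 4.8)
The plan is to construct an invariant $\Phi$ of balanced triangulations of $F^2$, valued in some finite set, such that (i) $\Phi$ is unchanged by each of the four operations BT-subdivision, BT-weld, BE-subdivision, BE-weld, and (ii) $\Phi$ takes at least two values on the balanced triangulations of $F^2$. Any $G,G'$ with $\Phi(G)\neq\Phi(G')$ then prove the theorem. Since the cross-flip theorem of Izmestiev--Klee--Novik already connects any two balanced triangulations of $F^2$ by the six cross-flips, such a $\Phi$ is necessarily altered by some pentagon contraction or splitting, so this approach pins the obstruction exactly on the pentagon operations.

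\emph{Step 1: catalog the local effect of the four operations.} Working on the boundary of the octahedron with its three color classes, one records precisely how a BT-subdivision of a (rainbow) facet $\{x_1,x_2,x_3\}$ acts — it inserts three new, pairwise adjacent, degree-$4$ vertices, one of each color, and raises each of $\deg x_1,\deg x_2,\deg x_3$ by $2$ — and how a BE-subdivision of a rainbow square acts: it ``blows up'' the diagonal, inserting two new adjacent degree-$4$ vertices whose colors are the two colors of the diagonal, raising by $2$ the degrees of the other two vertices of the square (which share the third color), and changing no other degree; BT-weld and BE-weld are the inverses. The next task is to find a combinatorial quantity of $G$ left invariant by all four. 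The first guesses fail — the number of vertices, the unordered triple of color-class sizes, and the count of vertices of degree $\equiv 0\pmod 4$ are each changed by a BE- or a BT-subdivision — so something finer is needed. The route I would pursue is to pass to the three associated decompositions: for each color $c$, let $Q_c$ be the cellular decomposition of $F^2$ whose $1$-skeleton is the bipartite subgraph of $G$ spanned by the non-$c$-colored vertices and whose $2$-cells are the (even) vertex-stars of the $c$-colored vertices. One verifies that each of the four operations modifies the triple $(Q_1,Q_2,Q_3)$ only within a disk, by local ``subdivision/weld''-type moves; $\Phi$ should then be a $\mathbb{Z}/2$-valued quantity of $(Q_1,Q_2,Q_3)$ that is insensitive to such local moves (a suitable parity count of cells, possibly refined by a $\mathbb{Z}/2$-homology class on $F^2$), and one checks separately that a pentagon operation \emph{does} change it.

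\emph{Steps 2 and 3: exhibit the triangulations.} On $S^2$ I would verify by a direct finite computation that two explicit balanced triangulations — say the octahedron together with a carefully chosen balanced triangulation on $8$ or $10$ vertices — have different $\Phi$. For an arbitrary closed surface $F^2$, fix a balanced triangulation $T_0$ of $F^2$ and take the balanced connected sum of $T_0$ (along a rainbow facet) with each of the two $S^2$-examples in turn. A BT-subdivision \emph{is} connected sum with an octahedron along a rainbow facet, so $\Phi$ is unchanged by connect-summing with an octahedron; more generally one checks that $\Phi$ behaves additively (or is unchanged) under balanced connected sum, so that the two resulting triangulations of $F^2$ still carry different values of $\Phi$.

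The principal obstacle is Step 1: identifying the correct $\Phi$ and proving it is invariant under the two \emph{subdivision} moves. These are ``large'' — a BT-subdivision replaces one facet by seven and perturbs three vertex-degrees simultaneously — so the induced changes in the local structure of $Q_1,Q_2,Q_3$ require a careful, if finite, case analysis; and one must make sure $\Phi$ is genuinely a function of the isomorphism type of the balanced triangulation, since in a sequence of moves the intermediate triangulations are arbitrary and the degree-$4$ vertices created by a subdivision need not remain adjacent. A lesser difficulty is the bookkeeping for $\Phi$ under connected sum needed to transport the $S^2$-examples to every surface. Should a clean invariant prove elusive, the fallback is to show that iterating welds always terminates in a \emph{unique} irreducible balanced triangulation (a confluence argument — termination is automatic since welds decrease the vertex count) and that every $F^2$ admits at least two non-isomorphic irreducible balanced triangulations; the risk there is that local confluence of the weld moves fails, which is itself part of what would need to be verified.
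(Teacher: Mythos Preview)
Your proposal identifies the right object --- the even embedding $Q_c$ obtained by deleting one color class --- but never actually produces the invariant $\Phi$, and that missing step is the entire proof. You acknowledge this yourself as ``the principal obstacle.'' Searching for a $\mathbb{Z}/2$-valued $\Phi$ is likely misguided: the paper's argument in fact shows there are \emph{infinitely many} equivalence classes under balanced subdivisions and welds on each $F^2$ (essentially one for each even embedding of $F^2$ with minimum degree $\geq 3$), so the natural invariant is not finite-valued, and there is no reason to expect a parity count to separate classes.

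The paper's route is close in spirit to your fallback but sidesteps confluence. Fix one color $c$ and pass to the bipartite graph $H=Q_c$. The four balanced operations on $G=S(H)$ induce six local moves on $H$: add/delete a pendant edge, subdivide/smooth an edge by inserting/removing two new degree-$2$ vertices, and add/delete a degree-$2$ vertex parallel to a path of length $2$. The key lemma, proved by a short case analysis and induction on the length of the sequence, is: if $H$ has minimum degree $\geq 3$, then any $H'$ reachable from $H$ by these six moves is already reachable using only the three \emph{expanding} moves. Hence $e(H')\geq e(H)$, so $|V(S(H'))|\geq |V(S(H))|$, with equality only when $H'=H$. Thus $S(H)$ is the unique vertex-minimal triangulation in its class, and any two non-isomorphic even embeddings of $F^2$ with minimum degree $\geq 3$ yield the desired $G,G'$. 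No confluence is needed: one never argues that two weld sequences agree, only that from a minimum-degree-$3$ starting point one cannot shrink at all. This also makes your Step~3 (connected sums to transport $S^2$-examples) unnecessary, since every surface already carries infinitely many such even embeddings.
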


Next, we consider how many different types of cross flips are necessary.
The above result shows that at least a P-contraction or a P-splitting is necessary.
Then since we can apply neither a P-contraction nor a P-splitting to the octahedral sphere (the boundary of the cross $3$-polytope),
we at least need three different cross-flips
to transform any balanced triangulation of the $2$-sphere to any other balanced triangulation of the $2$-sphere.
We show that a result proved by Kawarabayashi, Nakamoto and Suzuki in \cite{KNS} implies the following result which guarantees that three flips are indeed enough.

\begin{theorem}\label{thm:3operation}
Any two balanced triangulations of a closed surface $F^2$ are transformed into 
each other by a sequence of BE-subdivisions, BE-welds and P-contractions. 
\end{theorem}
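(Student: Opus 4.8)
The plan is to derive Theorem~\ref{thm:3operation} from the theorem of Kawarabayashi, Nakamoto and Suzuki \cite{KNS} on even triangulations, by translating the local operations used there into BE-subdivisions, BE-welds and P-contractions. The starting observation is that every balanced triangulation of a closed surface is an \emph{even} triangulation: in a proper $3$-colouring, the vertices of $\lk(v)$, read cyclically, form a properly $2$-coloured cycle, so $\deg v$ is even. Hence, for balanced triangulations $G$ and $G'$ of $F^2$, the result of \cite{KNS} (possibly after first equalising the numbers of vertices of $G$ and $G'$, which one can do with BE-subdivisions, BE-welds and P-contractions) provides a sequence $G = H_0 \to H_1 \to \cdots \to H_k = G'$ of even triangulations of $F^2$ in which each step is one of the moves of \cite{KNS}: an N-flip, which does not change the number of vertices, together with the vertex-count-changing moves needed to relate triangulations of different sizes.

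The heart of the argument is then a local verification that each such move can be simulated by a short sequence of BE-subdivisions, BE-welds and P-contractions. Concretely, an N-flip should be realisable by performing a BE-subdivision followed by a BE-weld at a suitably shifted position (both preserve the number of vertices, matching the N-flip), while each vertex-count-changing move should be realisable by combining BE-subdivisions and BE-welds with a single P-contraction. Here the P-contraction is genuinely needed: a BE-subdivision or BE-weld changes the number of vertices by $2$, so only a P-contraction (which changes it by $1$) can correct the parity of the vertex count; in particular, a P-splitting, if it occurs, is itself simulated by a BE-subdivision followed by a P-contraction. Carrying this out amounts to exhibiting finitely many explicit configurations and checking in each that the intermediate complexes are honest triangulations and that no double edge is created.

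Finally, one has to ensure that this translation stays inside the class of balanced triangulations, which is not obvious since the intermediate triangulations $H_i$ produced by \cite{KNS} need not be balanced. This follows from the fact that BE-subdivisions, BE-welds and P-contractions are cross-flips in the sense of \cite{IKN} and therefore preserve balancedness: $H_0 = G$ is balanced, and if $H_i$ is balanced then the sequence of cross-flips simulating $H_i \to H_{i+1}$ carries $H_i$ through balanced triangulations to $H_{i+1}$, so by induction every $H_i$ is balanced. Concatenating the simulating sequences then gives a sequence of BE-subdivisions, BE-welds and P-contractions from $G$ to $G'$.

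The main obstacle is the local translation step in the second paragraph: one must pin down the precise shapes of the operations of \cite{KNS}, produce an explicit realisation of each in terms of BE-subdivisions, BE-welds and P-contractions, and arrange the P-contractions so that the number of vertices matches at every stage of the simulation. One should also handle with care the boundary cases of the theorem of \cite{KNS} (triangulations that are too small for its hypotheses, or in which the configuration required for an N-flip does not appear); these may need separate treatment, for instance by first applying BE-subdivisions to push the triangulation into the range where \cite{KNS} applies.
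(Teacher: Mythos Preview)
Your overall plan---reduce to the Kawarabayashi--Nakamoto--Suzuki theorem and translate its local moves into BE-subdivisions, BE-welds and P-contractions---is exactly what the paper does. But the translation you propose is wrong in the crucial place, and the error is not a matter of detail.

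The result of \cite{KNS} used here connects two balanced triangulations with the same (sufficiently large) number of vertices by $N$-flips and $P_2$-flips; both of these are vertex-count preserving. There are no ``vertex-count-changing moves'' in \cite{KNS}. The paper's proof then shows that a $P_2$-flip is a single BE-subdivision followed by a single BE-weld, while an $N$-flip is realised by three BE-subdivisions, four P-contractions, and one BE-weld (Figure~\ref{fig:NN}). You have reversed the roles: you claim the $N$-flip is achieved by a BE-subdivision followed by a BE-weld, using P-contractions only for vertex-count parity. This cannot work. Indeed, Theorem~\ref{thm1} (proved via Theorem~\ref{3.1}) shows that from $G=S(H)$ with $H$ of minimum degree at least $3$, any nontrivial sequence of balanced subdivisions and welds strictly increases $|V(G)|$. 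An $N$-flip produces a different triangulation with the same number of vertices, so it is provably \emph{not} expressible as any sequence of BE-subdivisions and BE-welds (or more generally of balanced subdivisions and welds). The P-contractions are essential for simulating the $N$-flip itself, not merely for adjusting parity.

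Your final paragraph is also off target: the form of \cite{KNS} quoted in the paper is already stated for balanced triangulations, and both $N$-flips and $P_2$-flips visibly preserve a $3$-colouring, so the intermediate $H_i$ are balanced for free. No inductive argument through cross-flips is needed there.
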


As we mentioned, the set of three moves in the theorem is minimal possible.
However, somewhat surprisingly,
we show in Theorem \ref{finalthm} that most balanced triangulations of a fixed closed surface are actually connected by only P-splittings and P-contractions.
In particular, we prove the following strong statement for the $2$-sphere.

\begin{theorem}\label{thm:spheremain}
Any two balanced triangulations of the $2$-sphere except the octahedral sphere
can be transformed into each other by a sequence 
of P-splittings and P-contractions. 
\end{theorem}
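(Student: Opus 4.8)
Our plan is to deduce Theorem~\ref{thm:spheremain} from Theorem~\ref{thm:3operation} by showing that, away from a single exceptional triangulation, balanced edge subdivisions and welds become redundant once P-splittings and P-contractions are available. Write $O$ for the octahedral sphere, i.e.\ the boundary of the cross $3$-polytope. First I would record why $O$ must be excluded: it is the unique balanced triangulation of the $2$-sphere with six vertices and none has fewer, and a short inspection shows that, because the $3$-coloring forces the positions of the vertices involved, a P-splitting or a P-contraction performed on $O$ would always create a double edge; thus $O$ admits neither operation and forms a one-element class under them. For every other balanced triangulation of the $2$-sphere we must show that they all lie in a single such class, and the starting point is Theorem~\ref{thm:3operation}, which joins any two balanced triangulations of the $2$-sphere by a sequence of balanced edge subdivisions, balanced edge welds, P-splittings and P-contractions.

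The crux is the following local lemma: if $G$ is a balanced triangulation of the $2$-sphere with at least seven vertices and $G'$ is obtained from $G$ by a single balanced edge subdivision, then $G$ and $G'$ are joined by a sequence of P-splittings and P-contractions all of whose terms have at least seven vertices; generically two P-splittings suffice. Since a balanced edge subdivision adds two vertices and a balanced edge weld is its inverse, the same statement then covers balanced edge welds between two triangulations both having at least seven vertices. To prove the lemma I would enlarge the two triangles carrying the subdivided edge by adjoining one neighbouring face to form a three-triangle region, perform a P-splitting on that region (keeping its boundary fixed), examine the resulting five-triangle region --- a configuration of the kind pictured in Figure~\ref{fig:6defo} --- and perform a second P-splitting inside it; a direct computation shows that the two splittings can be arranged so that their composite is exactly the prescribed balanced edge subdivision. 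The hypothesis that $G$ has at least seven vertices is used to guarantee, after choosing the adjoined face suitably if necessary, that neither splitting creates a forbidden double edge. I expect this double-edge analysis --- especially in the tight cases where $G$ has seven or eight vertices, which are exactly where the octahedral sphere gets squeezed out --- to be the main obstacle.

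Granting the lemma, the theorem follows. Let $G$ and $G'$ be balanced triangulations of the $2$-sphere, neither equal to $O$, and take a sequence of balanced edge subdivisions, balanced edge welds, P-splittings and P-contractions from $G$ to $G'$ furnished by Theorem~\ref{thm:3operation}. I would first delete every occurrence of $O$ from this sequence. Since $O$ admits no P-splitting or P-contraction, any occurrence has the shape $A \to O \to B$; as $O$ has only six vertices, the move $A \to O$ must be a balanced edge weld and $O \to B$ a balanced edge subdivision, so both $A$ and $B$ arise from $O$ by a single balanced edge subdivision. Because all edges of $O$ are equivalent under its automorphism group, every balanced edge subdivision of $O$ produces the same triangulation up to isomorphism, so $A \cong B$ and the detour through $O$ can be excised. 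After this surgery the sequence avoids $O$, hence every triangulation in it has at least seven vertices, so the lemma applies to each balanced edge subdivision and balanced edge weld occurring in it. Replacing each of those moves by the corresponding sequence of P-splittings and P-contractions supplied by the lemma --- read in reverse for the welds --- turns the whole sequence into one consisting only of P-splittings and P-contractions and joining $G$ to $G'$, which is what we wanted.
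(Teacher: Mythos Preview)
Your strategy is different from the paper's, and the gap is in the ``local lemma''. You propose to realise an arbitrary BE-subdivision at an edge $v_0v_1$ by two P-splittings, after adjoining one of the four neighbouring faces; this is precisely the paper's Lemma~\ref{lemX}, which carries the hypothesis that if the adjoined face is $xv_1u$ then $uy$ is not already an edge. You hope that for at least one of the four choices of adjoined face the hypothesis holds, and you expect the failures to be concentrated near $|V(G)|\in\{7,8\}$. That expectation is wrong. If both $v_0$ and $v_1$ have degree~$4$, the link of $v_1$ is a $4$-cycle $x,v_0,y,u$ and the link of $v_0$ is a $4$-cycle $x,v_1,y,u'$; then all four obstructing edges $uy,\,ux,\,u'y,\,u'x$ are present and every choice of adjoined face is blocked. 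Such adjacent degree-$4$ pairs occur in balanced $2$-sphere triangulations of arbitrary size---for instance, a single BT-subdivision of any face produces three pairwise adjacent degree-$4$ vertices---so the two-PS replacement genuinely fails on edges that can appear in large $G$. Your sketch gives no alternative route for these edges, and I do not see a local fix; one would need a different idea.

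The paper sidesteps this by never trying to replace an \emph{arbitrary} BE-subdivision. It goes back one level, to Theorem~\ref{thm:KNS} ($N$- and $P_2$-flips connect balanced triangulations with enough vertices), and observes that the \emph{specific} BE-subdivisions and welds that occur in the decomposition of an $N$-flip in Figure~\ref{fig:NN}, and in that of a $P_2$-flip, automatically satisfy the hypothesis of Lemma~\ref{lemX}: the vertex that might cause a double edge was just created by the preceding step, so the bad edge cannot exist. Thus $N$- and $P_2$-flips, hence all moves in the range $|V|\ge M$, are realised by P-splittings and P-contractions. What remains is to reach that range from below, and for this one only needs that every non-octahedral balanced triangulation of the $2$-sphere admits \emph{some} P-splitting; this is Theorem~\ref{thm:spheremainlater}, proved by a short planarity argument showing that otherwise every vertex has degree~$4$. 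Your handling of possible detours through the octahedron is essentially fine (though you should also rule out reaching $O$ by a P-contraction, which follows since $O$ admits no P-splitting), but the heart of your argument---the local lemma---does not go through as written.
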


This paper is organized as follows. 
In the next section, we introduce some 
operations defined for bipartite graphs, 
and show a key lemma to prove our main theorem. 
Section~\ref{sect:proof} 
is devoted to prove our first main result in 
the paper. 
In Section~\ref{sect:nece}, we discuss how many different types of cross-flips are sufficient to connect given two balanced triangulations of a closed surface.

\section{Operations for Bipartite graphs}\label{sect:bip}

In this section, we consider bipartite graphs 
which are not necessarily embedded on surfaces, 
and prove the key lemma to prove our first main theorem. 

We first introduce some notation.
In the paper, we consider simple graphs.
Let $G$ be a simple graph.
We denote by $V(G)$ the vertex set of $G$.
The \textit{degree} of the vertex $v$ in $G$ is the number of edges of $G$ that contains $v$. The \textit{minimal degree} of $G$ is the minimum of degrees of vertices of $G$.
An edge on vertices $a$ and $b$ will be denoted by $ab$ and a face on vertices $a,b$ and $c$ will be denoted by $abc$.
A graph $G$ is \textit{$d$-colorable} if there is a map $c:V(G) \to \{1,2,\dots,d\}$ such that $c(v) \ne c(u)$ for any edge $uv$ of $G$.
A $2$-colorable graph is called a \textit{bipartite graph}.
For bipartite graphs, 
we define the following three operations: 
Let $H$ be a bipartite graph. 
\begin{itemize}
\item[(I)] Add a pendant edge $vw$ with $v\in V(H)$ and $w\notin V(H)$. 
(A pendant edge is an edge such that one of its vertex has degree one.)
\item[(II)] Replace an edge $e=uv$ of $H$ with three edges $up,pq,qv$, where $p$ and $q$ are new vertices. 
\item[(III)] Add a vertex $w\notin V(H)$ and two incident edges $xw,wy$ where 
$x$ and $y$ have distance $2$ in $H$ (i.e., $xy$ is not an edge of $H$ and there is a vertex $z$ such that $xz$ and $yz$ are edges of $H$). 
\end{itemize}
The inverse operations of the above (I), (II) and (III) are represented 
by (I'), (II') and (III'), respectively (see Figure~\ref{fig:6operations}). 
In particular, we call (II) the \textit{subdivision} of $uv$ and call (II') the \textit{smoothing} of the edges $up,pq,qv$.
Note that each of these six operations preserves the bipartiteness of the graph.

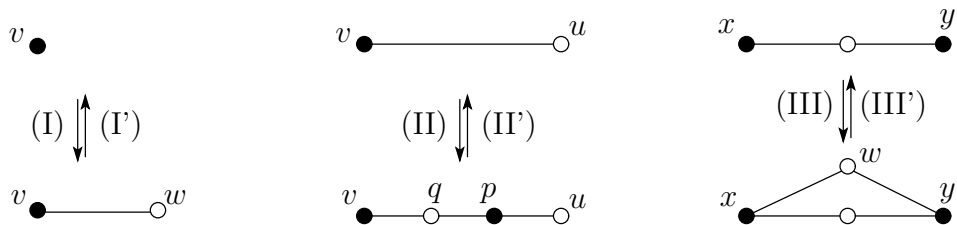
\begin{figure}[tb]
\begin{center}
{\unitlength 0.1in%
\begin{picture}( 49.3000, 11.5000)(  8.7000,-15.6000)%
%
\special{sh 1.000}%
\special{ia 1020 630 40 40  0.0000000  6.2831853}%
\special{pn 8}%
\special{ar 1020 630 40 40  0.0000000  6.2831853}%
%
\special{sh 1.000}%
\special{ia 1020 1490 40 40  0.0000000  6.2831853}%
\special{pn 8}%
\special{ar 1020 1490 40 40  0.0000000  6.2831853}%
%
\special{pn 8}%
\special{pa 1020 1500}%
\special{pa 1660 1500}%
\special{fp}%
%
\special{sh 0}%
\special{ia 1650 1490 40 40  0.0000000  6.2831853}%
\special{pn 8}%
\special{ar 1650 1490 40 40  0.0000000  6.2831853}%
%
\special{sh 1.000}%
\special{ia 2730 620 40 40  0.0000000  6.2831853}%
\special{pn 8}%
\special{ar 2730 620 40 40  0.0000000  6.2831853}%
%
\special{pn 8}%
\special{pa 2730 620}%
\special{pa 3750 620}%
\special{fp}%
%
\special{sh 0}%
\special{ia 3760 620 40 40  0.0000000  6.2831853}%
\special{pn 8}%
\special{ar 3760 620 40 40  0.0000000  6.2831853}%
%
\special{sh 1.000}%
\special{ia 2730 1520 40 40  0.0000000  6.2831853}%
\special{pn 8}%
\special{ar 2730 1520 40 40  0.0000000  6.2831853}%
%
\special{pn 8}%
\special{pa 2730 1520}%
\special{pa 3750 1520}%
\special{fp}%
%
\special{sh 0}%
\special{ia 3760 1520 40 40  0.0000000  6.2831853}%
\special{pn 8}%
\special{ar 3760 1520 40 40  0.0000000  6.2831853}%
%
\special{sh 0}%
\special{ia 3080 1520 40 40  0.0000000  6.2831853}%
\special{pn 8}%
\special{ar 3080 1520 40 40  0.0000000  6.2831853}%
%
\special{sh 1.000}%
\special{ia 3410 1520 40 40  0.0000000  6.2831853}%
\special{pn 8}%
\special{ar 3410 1520 40 40  0.0000000  6.2831853}%
%
\special{sh 1.000}%
\special{ia 4730 620 40 40  0.0000000  6.2831853}%
\special{pn 8}%
\special{ar 4730 620 40 40  0.0000000  6.2831853}%
%
\special{pn 8}%
\special{pa 4730 620}%
\special{pa 5750 620}%
\special{fp}%
%
\special{sh 1.000}%
\special{ia 5760 620 40 40  0.0000000  6.2831853}%
\special{pn 8}%
\special{ar 5760 620 40 40  0.0000000  6.2831853}%
%
\special{sh 0}%
\special{ia 5260 620 40 40  0.0000000  6.2831853}%
\special{pn 8}%
\special{ar 5260 620 40 40  0.0000000  6.2831853}%
%
\special{sh 1.000}%
\special{ia 4730 1520 40 40  0.0000000  6.2831853}%
\special{pn 8}%
\special{ar 4730 1520 40 40  0.0000000  6.2831853}%
%
\special{pn 8}%
\special{pa 4730 1520}%
\special{pa 5750 1520}%
\special{fp}%
%
\special{sh 1.000}%
\special{ia 5760 1520 40 40  0.0000000  6.2831853}%
\special{pn 8}%
\special{ar 5760 1520 40 40  0.0000000  6.2831853}%
%
\special{sh 0}%
\special{ia 5260 1520 40 40  0.0000000  6.2831853}%
\special{pn 8}%
\special{ar 5260 1520 40 40  0.0000000  6.2831853}%
%
\special{pn 8}%
\special{pa 4730 1520}%
\special{pa 5260 1270}%
\special{fp}%
\special{pa 5260 1270}%
\special{pa 5760 1520}%
\special{fp}%
%
\special{sh 0}%
\special{ia 5260 1260 40 40  0.0000000  6.2831853}%
\special{pn 8}%
\special{ar 5260 1260 40 40  0.0000000  6.2831853}%
%
\special{pn 8}%
\special{pa 1230 910}%
\special{pa 1230 1230}%
\special{fp}%
\special{sh 1}%
\special{pa 1230 1230}%
\special{pa 1250 1163}%
\special{pa 1230 1177}%
\special{pa 1210 1163}%
\special{pa 1230 1230}%
\special{fp}%
%
\special{pn 8}%
\special{pa 1270 1210}%
\special{pa 1270 890}%
\special{fp}%
\special{sh 1}%
\special{pa 1270 890}%
\special{pa 1250 957}%
\special{pa 1270 943}%
\special{pa 1290 957}%
\special{pa 1270 890}%
\special{fp}%
%
\special{pn 8}%
\special{pa 3230 910}%
\special{pa 3230 1230}%
\special{fp}%
\special{sh 1}%
\special{pa 3230 1230}%
\special{pa 3250 1163}%
\special{pa 3230 1177}%
\special{pa 3210 1163}%
\special{pa 3230 1230}%
\special{fp}%
%
\special{pn 8}%
\special{pa 3270 1210}%
\special{pa 3270 890}%
\special{fp}%
\special{sh 1}%
\special{pa 3270 890}%
\special{pa 3250 957}%
\special{pa 3270 943}%
\special{pa 3290 957}%
\special{pa 3270 890}%
\special{fp}%
%
\special{pn 8}%
\special{pa 5237 810}%
\special{pa 5237 1130}%
\special{fp}%
\special{sh 1}%
\special{pa 5237 1130}%
\special{pa 5257 1063}%
\special{pa 5237 1077}%
\special{pa 5217 1063}%
\special{pa 5237 1130}%
\special{fp}%
%
\special{pn 8}%
\special{pa 5277 1110}%
\special{pa 5277 790}%
\special{fp}%
\special{sh 1}%
\special{pa 5277 790}%
\special{pa 5257 857}%
\special{pa 5277 843}%
\special{pa 5297 857}%
\special{pa 5277 790}%
\special{fp}%
\put(9.8000,-11.3000){\makebox(0,0)[lb]{(I)}}%
\put(29.2000,-11.3000){\makebox(0,0)[lb]{(II)}}%
\put(48.8000,-10.3000){\makebox(0,0)[lb]{(III)}}%
\put(13.4000,-11.3000){\makebox(0,0)[lb]{(I')}}%
\put(33.4000,-11.3000){\makebox(0,0)[lb]{(II')}}%
\put(53.2000,-10.3000){\makebox(0,0)[lb]{(III')}}%
\put(8.7000,-6.1000){\makebox(0,0)[lb]{$v$}}%
\put(8.8000,-14.6000){\makebox(0,0)[lb]{$v$}}%
\put(16.8000,-14.6000){\makebox(0,0)[lb]{$w$}}%
\put(25.8000,-6.1000){\makebox(0,0)[lb]{$v$}}%
\put(38.0000,-5.7000){\makebox(0,0)[lb]{$u$}}%
\put(38.0000,-14.8000){\makebox(0,0)[lb]{$u$}}%
\put(26.1000,-14.6000){\makebox(0,0)[lb]{$v$}}%
\put(30.6000,-14.5000){\makebox(0,0)[lb]{$q$}}%
\put(33.4000,-14.5000){\makebox(0,0)[lb]{$p$}}%
\put(45.8000,-5.6000){\makebox(0,0)[lb]{$x$}}%
\put(57.4000,-5.4000){\makebox(0,0)[lb]{$y$}}%
\put(45.9000,-14.7000){\makebox(0,0)[lb]{$x$}}%
\put(57.4000,-14.5000){\makebox(0,0)[lb]{$y$}}%
\put(53.2000,-12.4000){\makebox(0,0)[lb]{$w$}}%
\end{picture}}%
\end{center}
\caption{Six operations defined for bipartite graphs.}
\label{fig:6operations}
\end{figure}

A set of two adjacent vertices $\{p,q\}$ 
of degree $2$ in a bipartite graph $H$ 
is said to be {\em smoothable} if it is possible to apply (II') that removes the vertices $p$ and $q$ to $H$; that is, 
there exists no cycle of length $4$ containing $p$ and $q$. 
Furthermore, a vertex $w$ of degree $2$ in a bipartite 
graph $H$ is said to be {\em removable\/} 
if we can remove the vertex $w$ by applying (III'); that is, there exists a $4$-cycle in $H$ 
containing $w$. 
The following lemma plays an important role when we prove 
our main theorem in the next section. 

\begin{lemma}\label{lm:key}
Let $H$ be a bipartite graph with minimum degree at least $3$. 
If $H'$ is obtained from $H$ by a sequence of operations (I), (II), (III), 
(I'), (II') and (III'), 
then $H'$ is obtained from $H$ by a sequence of operations (I), (II) and (III). 
\end{lemma}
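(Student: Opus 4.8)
\emph{Strategy.} The plan is to regard the six operations as generating an equivalence relation on bipartite graphs, each of (I), (II), (III) being undone by the matching primed operation, so that the hypothesis is exactly ``$H$ and $H'$ are equivalent'', and to single out (I'), (II'), (III') as \emph{reduction steps}. Call a bipartite graph \emph{reduced} if no reduction step applies to it, i.e.\ it has no pendant edge, no smoothable pair and no removable vertex of degree $2$; a graph of minimum degree at least $3$ has no vertex of degree $1$ or $2$, and is therefore reduced. I would prove that every equivalence class contains, up to renaming the auxiliary vertices, a \emph{unique} reduced graph, its \emph{normal form}. Granting this, $H$ is reduced, hence is the normal form of its own class; $H'$ lies in that class, so some sequence of reduction steps carries $H'$ to $H$; reversing that sequence is a sequence of operations (I), (II), (III) taking $H$ to $H'$, which is the assertion of the lemma.

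\emph{Reduction to local confluence.} The reduction relation terminates, since each of (I'), (II'), (III') deletes at least one vertex. By Newman's lemma it therefore suffices to verify \emph{local confluence}: if a bipartite graph $G$ admits two reduction steps, producing $G_1$ and $G_2$, then $G_1$ and $G_2$ can both be reduced further to a common graph (up to isomorphism). I would organize the check by the types of the two steps and by how the vertices they affect meet. When the two steps touch essentially disjoint parts of $G$ they commute, and $G_1$ and $G_2$ each reduce in one further step to the graph obtained by doing both; the one point requiring care is that smoothing a path, or deleting a degree-$2$ vertex, can create or destroy a $4$-cycle, and one has to check that this neither produces a pendant edge where it matters nor prevents the second step (the vertex or pair it targets keeps the property it needs).

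\emph{The main obstacle.} The substantial cases---which I expect to be the hard part---are the overlapping ones, and they all reduce to a ``theta'' picture: two vertices $x$ and $y$ joined by two internally disjoint connections of length at most $3$ (two parallel degree-$2$ vertices, or a degree-$2$ path parallel to another such path, or to the edge $xy$, possibly with a pendant edge attached). Here one reduction step blocks the other: smoothing forces the edge $xy$, and deleting a degree-$2$ vertex destroys the relevant $4$-cycle. The point to establish is that the blocked step is merely superseded---once a parallel connection becomes rigid, its interior vertices have degree $2$ and lie on a $4$-cycle, so they are removable by (III'), and their former neighbours then become pendant and are removed by (I')---and that, performing this on both sides, $G_1$ and $G_2$ descend to the same graph, namely the one in which both parallel connections have been collapsed. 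One also meets the degenerate outputs reduction can produce---isolated vertices and $K_2$- or short-path components---but in every such overlap the two competing steps are symmetric, so $G_1$ and $G_2$ are already isomorphic and nothing more is needed. Enumerating the overlapping types, checking convergence in each, and invoking Newman's lemma then completes the proof.
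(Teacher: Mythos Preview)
Your approach is sound and genuinely different from the paper's. The paper argues by induction on the length $t$ of the given sequence: reducing to the case where $o_1,\dots,o_{t-1}$ are forward moves and $o_t$ is one of (I'), (II'), (III'), it performs a case analysis on the type of $o_t$ and on whether the vertices deleted by $o_t$ are already present (and in the right configuration) in $H_{t-2}$, showing in every case that $H_t$ is reachable from $H$ in at most $t$ forward steps. In effect it is a commutation argument that pushes the single backward move one step to the left until it cancels. Your route packages the same phenomenon as an abstract rewriting system: termination is immediate (each of (I'), (II'), (III') strictly decreases the vertex count), Newman's lemma reduces everything to local confluence, and Church--Rosser together with the fact that $H$ is already reduced forces $H'\to^* H$, whose reverse is the desired forward sequence.

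What each buys: the paper's argument is self-contained and yields the mild quantitative bonus that the forward sequence has length at most $t$; yours is more conceptual and makes the structural reason (unique normal form) explicit, at the cost of importing Newman's lemma and having to run the local-confluence check. The case analyses are of comparable size: the paper has three main cases with sub-cases labelled (A), (B), (C), while your overlap analysis has the unordered pairs of reduction types, most of which are trivial by disjointness, with the genuinely interacting ``theta'' configurations carrying the real content---and those match up closely with the paper's sub-cases (for instance, your observation that a blocked smoothing is superseded by successive (III') and (I') deletions is exactly the mechanism in their Case~2(C) and Case~3(C)). Your sketch of how the overlapping cases resolve is correct in spirit; to turn the proposal into a proof you would still need to enumerate them, but nothing unexpected occurs.
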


\begin{proof}
In the following argument, 
we say that a bipartite graph is {\em configurable\/} from $H$ (by at most $t$ steps) if it can be obtained from $H$ by applying operations (I), (II) and (III) (at most $t$ times).

Let $H'$ be a graph obtained from $H$ by a sequence 
of operations (I), (II), (III), (I'), (II') and (III'). 
Then, there is a sequence of bipartite graphs 
$H=H_0, H_1, \ldots, H_t=H'$ such that 
$H_{i+1}$ is obtained from $H_i$ by one of the 
six operations for $i=0, \ldots, t-1$, as shown in the 
following diagram. 
\[
H=H_0
\stackrel{o_1} \longrightarrow H_1
\stackrel{o_2} \longrightarrow  H_2
\longrightarrow \cdots 
\stackrel{o_{t-2}} \longrightarrow H_{t-2}
\stackrel{o_{t-1}} \longrightarrow H_{t-1}
\stackrel{o_t} \longrightarrow H_{t}=H'.
\]
We claim that $H_t=H'$ is configurable from $H$ by at most $t$ steps.
We proceed by induction on $t$.
Since any vertex of $H$ has degree at least $3$,
$o_1$ must be (I), (II) or (III).
Thus the assertion is obvious when $t=1$.
Suppose $t \geq 2$.
To prove the desired assertion, it only suffices to show the case when each of $o_1,\dots,o_{t-1}$ is one of (I), (II) and (III), 
and $o_t$ is one of (I'), (II') and (III').

\bigskip
\noindent[Case 1] Suppose that $o_t$ is (I') which
removes a vertex $w$ and an edge $vw$ from $H_{t-1}$. 
If $w$ is not a vertex of $H_{t-2}$, then $o_{t-1}$ should be 
(I) which add $w$ and $vw$; note that each of (II) and (III) 
does not generate a new vertex of degree $1$. 
In this case, it is clear that $H_{t-2}=H_t$ and 
hence $H_t$ is configurable. 
Thus, we assume that $w$ is a vertex of $H_{t-2}$. 
Since none of (I), (II) and (III) decrease the degrees of vertices, 
$w$ has degree $1$ in $H_{t-2}$. 
Let $v'$ denotes the unique neighbor of $w$ in $H_{t-2}$. 

First, suppose that $v \ne v'$. 
In this case, $o_{t-1}$ should be (II) that subdivide $v'w$, 
and hence a graph isomorphic to $H_t$ can be obtained from $H_{t-2}$ by adding a pendant edge to $w$ by applying (I) (see Figure~\ref{fig:case11}).
Next, we suppose that $v=v'$. 
We delete a vertex $w$ from $H_{t-2}$ and denote the resulting graph by 
$H'_{t-2}$.
Since $H_{t-2} \to H_{t-2}'$ is an operation (I'),  
by the induction hypothesis, $H'_{t-2}$ is configurable from $H$ by at most $t-1$ steps.
Furthermore, since $o_{t-1}$ is not (II) which subdivides $wv$, 
we can apply the same operation as $o_{t-1}$ 
to $H'_{t-2}$ and obtain a graph isomorphic to $H_t$. 
Therefore, $H_t$ is configurable from $H$ also in this case. 

\begin{figure}[tb]
\begin{center}
{\unitlength 0.1in%
\begin{picture}( 44.6000, 10.4000)(  7.9000,-15.0000)%
%
\special{sh 1.000}%
\special{ia 4960 1250 40 40  0.0000000  6.2831853}%
\special{pn 8}%
\special{ar 4960 1250 40 40  0.0000000  6.2831853}%
%
\special{pn 8}%
\special{pa 4950 1250}%
\special{pa 5210 830}%
\special{fp}%
%
\special{sh 1.000}%
\special{ia 960 1250 40 40  0.0000000  6.2831853}%
\special{pn 8}%
\special{ar 960 1250 40 40  0.0000000  6.2831853}%
\put(8.8000,-11.9000){\makebox(0,0)[lb]{$v'$}}%
%
\special{pn 8}%
\special{pa 960 1250}%
\special{pa 1330 610}%
\special{fp}%
%
\special{pn 8}%
\special{pa 790 1250}%
\special{pa 950 1250}%
\special{fp}%
\special{pa 950 1250}%
\special{pa 950 1440}%
\special{fp}%
%
\special{sh 0}%
\special{ia 1310 650 40 40  0.0000000  6.2831853}%
\special{pn 8}%
\special{ar 1310 650 40 40  0.0000000  6.2831853}%
%
\special{pn 8}%
\special{pa 1790 1010}%
\special{pa 2340 1010}%
\special{fp}%
\special{sh 1}%
\special{pa 2340 1010}%
\special{pa 2273 990}%
\special{pa 2287 1010}%
\special{pa 2273 1030}%
\special{pa 2340 1010}%
\special{fp}%
\put(11.7000,-5.9000){\makebox(0,0)[lb]{$w$}}%
\put(11.1000,-16.3000){\makebox(0,0)[lb]{$H_{t-2}$}}%
\put(19.7000,-11.8000){\makebox(0,0)[lb]{$o_{t-1}$}}%
%
\special{sh 1.000}%
\special{ia 2960 1250 40 40  0.0000000  6.2831853}%
\special{pn 8}%
\special{ar 2960 1250 40 40  0.0000000  6.2831853}%
\put(30.4000,-8.4000){\makebox(0,0)[lb]{$v$}}%
%
\special{pn 8}%
\special{pa 2960 1250}%
\special{pa 3330 610}%
\special{fp}%
%
\special{pn 8}%
\special{pa 2790 1250}%
\special{pa 2950 1250}%
\special{fp}%
\special{pa 2950 1250}%
\special{pa 2950 1440}%
\special{fp}%
%
\special{sh 0}%
\special{ia 3310 650 40 40  0.0000000  6.2831853}%
\special{pn 8}%
\special{ar 3310 650 40 40  0.0000000  6.2831853}%
%
\special{pn 8}%
\special{pa 3790 1010}%
\special{pa 4340 1010}%
\special{fp}%
\special{sh 1}%
\special{pa 4340 1010}%
\special{pa 4273 990}%
\special{pa 4287 1010}%
\special{pa 4273 1030}%
\special{pa 4340 1010}%
\special{fp}%
\put(31.7000,-5.9000){\makebox(0,0)[lb]{$w$}}%
\put(31.1000,-16.3000){\makebox(0,0)[lb]{$H_{t-1}$}}%
\put(39.7000,-11.8000){\makebox(0,0)[lb]{$o_{t}$}}%
%
\special{sh 1.000}%
\special{ia 3200 840 40 40  0.0000000  6.2831853}%
\special{pn 8}%
\special{ar 3200 840 40 40  0.0000000  6.2831853}%
\put(50.4000,-8.4000){\makebox(0,0)[lb]{$v$}}%
%
\special{pn 8}%
\special{pa 4790 1250}%
\special{pa 4950 1250}%
\special{fp}%
\special{pa 4950 1250}%
\special{pa 4950 1440}%
\special{fp}%
\put(49.9000,-16.3000){\makebox(0,0)[lb]{$H_t$}}%
%
\special{sh 0}%
\special{ia 5080 1040 40 40  0.0000000  6.2831853}%
\special{pn 8}%
\special{ar 5080 1040 40 40  0.0000000  6.2831853}%
%
\special{sh 1.000}%
\special{ia 5210 830 40 40  0.0000000  6.2831853}%
\special{pn 8}%
\special{ar 5210 830 40 40  0.0000000  6.2831853}%
%
\special{sh 0}%
\special{ia 3080 1040 40 40  0.0000000  6.2831853}%
\special{pn 8}%
\special{ar 3080 1040 40 40  0.0000000  6.2831853}%
\put(28.8000,-11.9000){\makebox(0,0)[lb]{$v'$}}%
\put(48.8000,-11.9000){\makebox(0,0)[lb]{$v'$}}%
\end{picture}}%
\end{center}
\caption{Configurations in Case 1 in the proof of Lemma~\ref{lm:key}.}
\label{fig:case11}
\end{figure}


\bigskip
\noindent[Case 2] Suppose that $o_t$ is (II') that replace edges 
$up,pq,qv$ with $uv$. 
First, suppose that both $p$ and $q$ are vertices of $H_{t-2}$
and $\{p,q\}$ is smoothable in $H_{t-2}$. 
Let $u'$ and $v'$ denote the vertices such that $u'p,pq,qv'$ are edges of $H_{t-2}$.
We apply (II') that replace $u'p,pq,qv'$ with 
$u'v'$ to $H_{t-2}$ 
and denote the resulting graph by $H'_{t-2}$. 
By the induction hypothesis, $H'_{t-2}$ is configurable from $H$ by at most $t-1$ steps. 
If $o_{t-1}$ is not (II) which subdivides either $u'p$ or $qv'$, 
then we can apply $o_{t-1}$ to $H'_{t-2}$ and obtain a 
graph isomorphic to $H_t$. 
On the other hand, 
if $o_{t-1}$ is (II) that subdivides either $u'p$ or $qv'$, 
then $H_t$ and $H_{t-2}$ are clearly isomorphic. 
In either case, $H_t$ is configurable from $H$ by at most $t$ steps.

By the above argument, 
we only need to discuss the case when at least one of $p$ and $q$ is not a vertex of $H_{t-2}$ or  $\{p, q\}$ is not smoothable in $H_{t-2}$. 
We divide the argument into three cases (A), (B) and (C) depending 
on the situation.

(A) Neither $p$ nor $q$ is a vertex of $H_{t-2}$:  
In this case, $o_{t-1}$ is clearly an operation adding 
$p$ and $q$, that is, $o_{t-1}$ is (II) that subdivides $uv$ in $H_{t-2}$. 
It is easy to see that $H_{t-2}=H_t$.

(B) $p$ is a vertex of $H_{t-2}$ but $q$ is not of $H_{t-2}$: 
Note that there exists no cycle of length $4$ containing 
$p$ and $q$ in $H_{t-1}$ since $\{p, q\}$ is smoothable in 
$H_{t-1}$. 
Under the condition, $q$ must be added by $o_{t-1}$, and 
we can conclude that $o_{t-1}$ is (II) that subdivide an edge incident to 
$p$.
(If $o_{t-1}$ is (III), then $p$ and $q$ would lie on a $4$-cycle 
in $H_{t-1}$.) 
As a result, $H_{t-2}$ is isomorphic to $H_t$ and hence 
$H_t$ is configurable from $H$.

\begin{figure}[tb]
\begin{center}
\input{Fig6-2.tex}
\end{center}
\caption{Configurations of (C) in Case 2 in the proof of Lemma~\ref{lm:key}.}
\label{fig:case2}
\end{figure}

(C) Both of $p$ and $q$ are the vertices of $H_{t-2}$: 
Here note that $p$ and $q$ are adjacent and have degree at most $2$ 
in $H_{t-2}$ since each of (I), (II) and (III) does not decrease 
the degrees of vertices and does not join two non-adjacent vertices. 
If one of $p$ and $q$, say $q$, has degree $1$, then $o_{t-1}$ should be (I) 
that add an edge incident to $q$ since (III) would generate a $4$-cycle containing $p$ and $q$. 
In this case, a graph isomorphic to $H_t$ can be obtained from $H_{t-2}$ 
by deleting $q$ using operation (I'), and hence $H_t$ is configurable from $H$ 
by the induction 
hypothesis (see the upper diagram in Figure~\ref{fig:case2}). 
On the other hand, if each of $p$ and $q$ has degree $2$, 
then there should exist a $4$-cycle containing $p$ and $q$ in $H_{t-2}$ 
under our assumption. 
Since $\{p, q\}$ is smoothable in $H_{t-1}$,
$o_{t-1}$ should be (II) that subdivide an edge on the $4$-cycle. 
In any case, $H_{t-2}$ and $H_{t}$ is isomorphic to each other 
(see the bottom diagram in Figure~\ref{fig:case2}).

\bigskip

\noindent
[Case 3]
Suppose that $o_t$ is (III') deleting a vertex $w$ of degree $2$ 
and two edges $xw$ and $yw$. 
Note that $H_{t-1}$ must have a $4$-cycle that contains $w$.
First assume that $w$ is a vertex of $H_{t-2}$ 
and is removable in $H_{t-2}$. 
Let $x'$ and $y'$ denote the vertices adjacent to $w$ in $H_{t-2}$. 
Now, since there exists a $4$-cycle containing $w$ in $H_{t-1}$, 
$o_{t-1}$ is not (II) that subdivides $x'w$ or $wy'$. 
Thus, we have $\{x',y'\}=\{x,y\}$. 
We delete $w$ from $H_{t-2}$ by applying (III') and denote the resulting graph 
by $H'_{t-2}$. 
Since $o_{t-1}$ is not (II) subdividing $xw$ or $wy$, 
we can apply the same operation as $o_{t-1}$ to $H'_{t-2}$ and 
obtain a graph isomorphic to $H_t$. 
By the induction hypothesis, $H_t$ is configurable from $H$ by at most $t$ steps.

By the above argument, 
we may assume that $w$ is not a vertex of $H_{t-2}$ or 
$w$ is not removable in $H_{t-2}$. 
It suffices to discuss the following three cases (A), (B) and (C). 

(A) $w$ is not a vertex of $H_{t-2}$: 
Clearly, $w$ must be added by $o_{t-1}$. 
Since $H_{t-1}$ has a $4$-cycle containing $w$, 
$o_{t-1}$ cannot be (II); that is, $o_{t-1}$ should be (III). 
Then, it is easy to see that $H_{t-2}=H_t$. 

(B) $w$ has degree $1$ in $H_{t-2}$: 
In this case, $o_{t-1}$ is clearly (III). 
We assume that $o_{t-1}$ adds a vertex $v$ and edges $wv$ 
and $vu$ (see Figure~\ref{fig:case3}). 
We remove $w$ from $H_{t-2}$ by applying (I'), 
and denote the resulting graph by $H_{t-2}'$. 
By the induction hypothesis, $H_{t-2}'$ is configurable from $H$ by at most $t-1$ steps.
Furthermore, $H_t$ is obtained from 
$H_{t-2}'$ by (I) which adds an edge incident to $u$. 
Thus, $H_t$ is also configurable from $H$ by at most $t$ steps.

\begin{figure}[tb]
\begin{center}
{\unitlength 0.1in%
\begin{picture}( 47.2000,  9.3000)(  7.8000,-15.0000)%
%
\special{pn 8}%
\special{pa 1620 1250}%
\special{pa 1460 1250}%
\special{fp}%
\special{pa 1460 1250}%
\special{pa 1460 1440}%
\special{fp}%
%
\special{pn 8}%
\special{pa 1790 1010}%
\special{pa 2340 1010}%
\special{fp}%
\special{sh 1}%
\special{pa 2340 1010}%
\special{pa 2273 990}%
\special{pa 2287 1010}%
\special{pa 2273 1030}%
\special{pa 2340 1010}%
\special{fp}%
\put(11.1000,-16.3000){\makebox(0,0)[lb]{$H_{t-2}$}}%
\put(19.7000,-11.8000){\makebox(0,0)[lb]{$o_{t-1}$}}%
%
\special{pn 8}%
\special{pa 3790 1010}%
\special{pa 4340 1010}%
\special{fp}%
\special{sh 1}%
\special{pa 4340 1010}%
\special{pa 4273 990}%
\special{pa 4287 1010}%
\special{pa 4273 1030}%
\special{pa 4340 1010}%
\special{fp}%
\put(31.1000,-16.3000){\makebox(0,0)[lb]{$H_{t-1}$}}%
\put(39.7000,-11.8000){\makebox(0,0)[lb]{$o_{t}$}}%
\put(49.9000,-16.3000){\makebox(0,0)[lb]{$H_t$}}%
%
\special{sh 1.000}%
\special{ia 950 1250 40 40  0.0000000  6.2831853}%
\special{pn 8}%
\special{ar 950 1250 40 40  0.0000000  6.2831853}%
%
\special{pn 8}%
\special{pa 780 1250}%
\special{pa 940 1250}%
\special{fp}%
\special{pa 940 1250}%
\special{pa 940 1440}%
\special{fp}%
\put(8.9000,-7.0000){\makebox(0,0)[lb]{$w$}}%
%
\special{pn 8}%
\special{pa 950 1240}%
\special{pa 950 760}%
\special{fp}%
%
\special{sh 0}%
\special{ia 950 770 40 40  0.0000000  6.2831853}%
\special{pn 8}%
\special{ar 950 770 40 40  0.0000000  6.2831853}%
\put(15.0000,-12.1000){\makebox(0,0)[lb]{$u$}}%
%
\special{pn 8}%
\special{pa 1420 1250}%
\special{pa 940 1250}%
\special{fp}%
%
\special{sh 0}%
\special{ia 1450 1250 40 40  0.0000000  6.2831853}%
\special{pn 8}%
\special{ar 1450 1250 40 40  0.0000000  6.2831853}%
%
\special{pn 8}%
\special{pa 3510 1250}%
\special{pa 3350 1250}%
\special{fp}%
\special{pa 3350 1250}%
\special{pa 3350 1440}%
\special{fp}%
%
\special{sh 1.000}%
\special{ia 2840 1250 40 40  0.0000000  6.2831853}%
\special{pn 8}%
\special{ar 2840 1250 40 40  0.0000000  6.2831853}%
%
\special{pn 8}%
\special{pa 2670 1250}%
\special{pa 2830 1250}%
\special{fp}%
\special{pa 2830 1250}%
\special{pa 2830 1440}%
\special{fp}%
\put(27.8000,-7.0000){\makebox(0,0)[lb]{$w$}}%
%
\special{pn 8}%
\special{pa 2840 1240}%
\special{pa 2840 760}%
\special{fp}%
%
\special{pn 8}%
\special{pa 3310 770}%
\special{pa 2830 770}%
\special{fp}%
%
\special{sh 0}%
\special{ia 2840 770 40 40  0.0000000  6.2831853}%
\special{pn 8}%
\special{ar 2840 770 40 40  0.0000000  6.2831853}%
%
\special{sh 1.000}%
\special{ia 3340 770 40 40  0.0000000  6.2831853}%
\special{pn 8}%
\special{ar 3340 770 40 40  0.0000000  6.2831853}%
\put(32.5000,-7.0000){\makebox(0,0)[lb]{$v$}}%
%
\special{pn 8}%
\special{pa 3340 1240}%
\special{pa 3340 760}%
\special{fp}%
%
\special{pn 8}%
\special{pa 3310 1250}%
\special{pa 2830 1250}%
\special{fp}%
%
\special{sh 0}%
\special{ia 3340 1250 40 40  0.0000000  6.2831853}%
\special{pn 8}%
\special{ar 3340 1250 40 40  0.0000000  6.2831853}%
%
\special{pn 8}%
\special{pa 5500 1250}%
\special{pa 5340 1250}%
\special{fp}%
\special{pa 5340 1250}%
\special{pa 5340 1440}%
\special{fp}%
%
\special{sh 1.000}%
\special{ia 4830 1250 40 40  0.0000000  6.2831853}%
\special{pn 8}%
\special{ar 4830 1250 40 40  0.0000000  6.2831853}%
%
\special{pn 8}%
\special{pa 4660 1250}%
\special{pa 4820 1250}%
\special{fp}%
\special{pa 4820 1250}%
\special{pa 4820 1440}%
\special{fp}%
%
\special{sh 1.000}%
\special{ia 5330 770 40 40  0.0000000  6.2831853}%
\special{pn 8}%
\special{ar 5330 770 40 40  0.0000000  6.2831853}%
\put(52.4000,-7.0000){\makebox(0,0)[lb]{$v$}}%
%
\special{pn 8}%
\special{pa 5330 1240}%
\special{pa 5330 760}%
\special{fp}%
%
\special{pn 8}%
\special{pa 5300 1250}%
\special{pa 4820 1250}%
\special{fp}%
%
\special{sh 0}%
\special{ia 5330 1250 40 40  0.0000000  6.2831853}%
\special{pn 8}%
\special{ar 5330 1250 40 40  0.0000000  6.2831853}%
\put(34.0000,-12.1000){\makebox(0,0)[lb]{$u$}}%
\put(53.9000,-12.1000){\makebox(0,0)[lb]{$u$}}%
\end{picture}}%
\end{center}
\caption{Configurations of (B) in Case 3 in the proof of Lemma~\ref{lm:key}.}
\label{fig:case3}
\end{figure}

\begin{figure}[tb]
\begin{center}
{\unitlength 0.1in%
\begin{picture}( 47.1000,  9.3000)(  7.8000,-15.0000)%
%
\special{pn 8}%
\special{pa 1420 770}%
\special{pa 940 770}%
\special{fp}%
%
\special{pn 8}%
\special{pa 1620 770}%
\special{pa 1460 770}%
\special{fp}%
\special{pa 1460 770}%
\special{pa 1460 580}%
\special{fp}%
%
\special{pn 8}%
\special{pa 1790 1010}%
\special{pa 2340 1010}%
\special{fp}%
\special{sh 1}%
\special{pa 2340 1010}%
\special{pa 2273 990}%
\special{pa 2287 1010}%
\special{pa 2273 1030}%
\special{pa 2340 1010}%
\special{fp}%
\put(11.1000,-16.3000){\makebox(0,0)[lb]{$H_{t-2}$}}%
\put(19.7000,-11.8000){\makebox(0,0)[lb]{$o_{t-1}$}}%
%
\special{pn 8}%
\special{pa 3790 1010}%
\special{pa 4340 1010}%
\special{fp}%
\special{sh 1}%
\special{pa 4340 1010}%
\special{pa 4273 990}%
\special{pa 4287 1010}%
\special{pa 4273 1030}%
\special{pa 4340 1010}%
\special{fp}%
\put(31.1000,-16.3000){\makebox(0,0)[lb]{$H_{t-1}$}}%
\put(39.7000,-11.8000){\makebox(0,0)[lb]{$o_{t}$}}%
\put(49.9000,-16.3000){\makebox(0,0)[lb]{$H_t$}}%
%
\special{sh 1.000}%
\special{ia 950 1250 40 40  0.0000000  6.2831853}%
\special{pn 8}%
\special{ar 950 1250 40 40  0.0000000  6.2831853}%
%
\special{pn 8}%
\special{pa 780 1250}%
\special{pa 940 1250}%
\special{fp}%
\special{pa 940 1250}%
\special{pa 940 1440}%
\special{fp}%
\put(8.9000,-7.0000){\makebox(0,0)[lb]{$w$}}%
%
\special{pn 8}%
\special{pa 950 1240}%
\special{pa 950 760}%
\special{fp}%
%
\special{sh 0}%
\special{ia 950 770 40 40  0.0000000  6.2831853}%
\special{pn 8}%
\special{ar 950 770 40 40  0.0000000  6.2831853}%
\put(15.0000,-7.3000){\makebox(0,0)[lb]{$y'$}}%
%
\special{sh 1.000}%
\special{ia 2840 1250 40 40  0.0000000  6.2831853}%
\special{pn 8}%
\special{ar 2840 1250 40 40  0.0000000  6.2831853}%
%
\special{pn 8}%
\special{pa 2670 1250}%
\special{pa 2830 1250}%
\special{fp}%
\special{pa 2830 1250}%
\special{pa 2830 1440}%
\special{fp}%
\put(27.8000,-7.0000){\makebox(0,0)[lb]{$w$}}%
%
\special{pn 8}%
\special{pa 2840 1240}%
\special{pa 2840 760}%
\special{fp}%
%
\special{pn 8}%
\special{pa 3310 770}%
\special{pa 2830 770}%
\special{fp}%
%
\special{sh 0}%
\special{ia 2840 770 40 40  0.0000000  6.2831853}%
\special{pn 8}%
\special{ar 2840 770 40 40  0.0000000  6.2831853}%
%
\special{sh 1.000}%
\special{ia 3340 770 40 40  0.0000000  6.2831853}%
\special{pn 8}%
\special{ar 3340 770 40 40  0.0000000  6.2831853}%
%
\special{pn 8}%
\special{pa 3340 1240}%
\special{pa 3340 760}%
\special{fp}%
%
\special{pn 8}%
\special{pa 3310 1250}%
\special{pa 2830 1250}%
\special{fp}%
%
\special{sh 0}%
\special{ia 3340 1250 40 40  0.0000000  6.2831853}%
\special{pn 8}%
\special{ar 3340 1250 40 40  0.0000000  6.2831853}%
%
\special{sh 1.000}%
\special{ia 4830 1250 40 40  0.0000000  6.2831853}%
\special{pn 8}%
\special{ar 4830 1250 40 40  0.0000000  6.2831853}%
%
\special{pn 8}%
\special{pa 4660 1250}%
\special{pa 4820 1250}%
\special{fp}%
\special{pa 4820 1250}%
\special{pa 4820 1440}%
\special{fp}%
%
\special{sh 1.000}%
\special{ia 5330 770 40 40  0.0000000  6.2831853}%
\special{pn 8}%
\special{ar 5330 770 40 40  0.0000000  6.2831853}%
%
\special{pn 8}%
\special{pa 5330 1240}%
\special{pa 5330 760}%
\special{fp}%
%
\special{pn 8}%
\special{pa 5300 1250}%
\special{pa 4820 1250}%
\special{fp}%
%
\special{sh 0}%
\special{ia 5330 1250 40 40  0.0000000  6.2831853}%
\special{pn 8}%
\special{ar 5330 1250 40 40  0.0000000  6.2831853}%
\put(34.0000,-12.1000){\makebox(0,0)[lb]{$v$}}%
\put(53.9000,-12.1000){\makebox(0,0)[lb]{$v$}}%
%
\special{sh 1.000}%
\special{ia 1460 770 40 40  0.0000000  6.2831853}%
\special{pn 8}%
\special{ar 1460 770 40 40  0.0000000  6.2831853}%
\put(7.9000,-14.2000){\makebox(0,0)[lb]{$x'$}}%
%
\special{pn 8}%
\special{pa 3500 770}%
\special{pa 3340 770}%
\special{fp}%
\special{pa 3340 770}%
\special{pa 3340 580}%
\special{fp}%
\put(33.8000,-7.3000){\makebox(0,0)[lb]{$y'$}}%
%
\special{pn 8}%
\special{pa 5490 770}%
\special{pa 5330 770}%
\special{fp}%
\special{pa 5330 770}%
\special{pa 5330 580}%
\special{fp}%
\put(53.7000,-7.3000){\makebox(0,0)[lb]{$y'$}}%
\put(26.7000,-14.2000){\makebox(0,0)[lb]{$x'$}}%
\put(46.6000,-14.2000){\makebox(0,0)[lb]{$x'$}}%
\end{picture}}%
\end{center}
\caption{Configurations of (C) in Case 3 in the 
proof of Lemma~\ref{lm:key}.}
\label{fig:case3(C)}
\end{figure}

(C) $w$ is a vertex of degree $2$ in $H_{t-2}$: 
Denote two vertices adjacent to $w$ in $H_{t-2}$ by $x'$ and $y'$. 
By our assumption, $w$ is not removable in $H_{t-2}$, that is, 
there exists no cycle of length $4$ containing $w$. 
On the other hand, $w$ is removable and there exists such a 
$4$-cycle in $H_{t-1}$. 
To satisfy these conditions, $o_{t-1}$ should be (III) 
which adds a vertex $v$ and two edges $x'v$ and $vy'$. 
However, it is easy to see that $H_{t-2}$ is isomorphic to $H_t$
(see Figure~\ref{fig:case3(C)}).

Now, we have considered all cases and hence the 
lemma follows. 
\end{proof}

\section{Proof of Theorem~\ref{thm1}}\label{sect:proof}

An even embedding $H$ of a closed surface $F^2$
is a graph embedded on $F^2$ such that each face of $H$ is bounded by a cycle of even length.
For an even embedding $H$ of $F^2$, its \textit{face subdivision}, denoted by $S(H)$, is the triangulation of $F^2$ obtained from $H$ by adding a new vertex into each face of $H$ and joining it all vertices on the corresponding boundary cycle.
Since $H$ is $2$-colorable and since no vertices of $S(H)$ which are not the vertices of $H$ are adjacent, $S(H)$ is a balanced triangulation.
Conversely, for any balanced triangulation $G$ of $F^2$,
we can obtain an even embedding $H$ of $F^2$ such that $G=S(H)$ by removing vertices of one color from $G$.
We denote by $e(G)$ the number of edges of a graph $G$.
Since $|V(S(H))|$ equals the sum of $|V(H)|$ and the number of faces of $H$,
by Euler's formula,
for even embeddings $K$ and $K'$ of a fixed closed surface $F^2$
one has $|V(S(K))|>|V(S(K'))|$ if and only if $e(K)>e(K')$.

For each closed surface $F^2$,
there are infinitely many even embeddings whose minimal degree is at least $3$.
Hence the next result proves Theorem \ref{thm1}.

\begin{theorem}
\label{3.1}
Let $H$ and $K$ be even embeddings of a closed surface $F^2$ whose minimal degree is at least $3$.
If $S(H)$ is not isomorphic to $S(K)$, then $S(K)$ cannot be obtained from $S(H)$ by a sequence of balanced subdivisions and welds.
\end{theorem}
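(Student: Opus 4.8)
The plan is to reduce Theorem~\ref{3.1} to Lemma~\ref{lm:key} by translating balanced subdivisions and welds into the six operations on bipartite graphs introduced in Section~\ref{sect:bip}. The dictionary is the following. A connected balanced triangulation $G$ of $F^2$ carries an essentially unique $3$-coloring (coloring one triangle forces all the others along edge-adjacencies), and if $C$ is any one of its three color classes then $C$ is independent, every triangle of $G$ meets $C$ in exactly one vertex, and the stars of the vertices of $C$ are exactly the faces of $G\setminus C$; hence $G\setminus C$ is an even embedding of $F^2$ with $S(G\setminus C)=G$. Thus every balanced triangulation is a face subdivision, although $S$ need not be injective on even embeddings. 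I will only use the numerical consequence of Euler's formula recorded above, namely $e(L)=|V(S(L))|-\chi(F^2)$ for every even embedding $L$ of $F^2$.

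Next I would set up the translation. Assume for contradiction that $S(H)\not\cong S(K)$ but that $S(K)$ is obtained from $S(H)$ by a sequence $S(H)=G_0\to G_1\to\cdots\to G_m=S(K)$ of balanced subdivisions and welds. Fix the $3$-coloring of $G_0=S(H)$ whose face-vertex class is $C_0=V(S(H))\setminus V(H)$; since each of the four operations in Figure~\ref{fig:6defo} is supported in a bounded patch, this coloring propagates uniquely to every $G_i$. Writing $C_i$ for the propagated class and $L_i:=G_i\setminus C_i$, we get $L_0=H$, $S(L_i)=G_i$, and $L_m=S(K)\setminus C_m$, which is \emph{some} even embedding of $F^2$ with $S(L_m)=S(K)$. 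The heart of the argument is the local claim that \emph{whenever $G_{i+1}$ is obtained from $G_i$ by a balanced subdivision or weld, $L_{i+1}$ is obtained from $L_i$ by a sequence of length one or two of the operations (I), (I'), (II), (II'), (III), (III')}. This is checked case by case from Figure~\ref{fig:6defo} according to how the affected patch meets $C_i$. A BE-subdivision induces on $L_i$ the subdivision (II) of an edge when the diagonal of the affected two-triangle quadrilateral lies in $L_i$, and an operation (III) inserting a vertex of degree $2$ when one endpoint of that diagonal lies in $C_i$; dually a BE-weld induces (II') or (III'). A BT-subdivision replaces a triangle by the complementary $7$-facet ball in the boundary of the octahedron, whose three new vertices inherit the three colors of the replaced triangle; deleting $C_i$ leaves a $4$-cycle attached along an edge already present in $L_i$, which is realized by an operation (I) adding a pendant edge followed by an operation (III); dually a BT-weld induces (III') followed by (I'). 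In each case the smoothability or removability hypothesis needed to legitimize the bipartite operations follows from the fact that the two color classes of $G_i$ other than $C_i$ are independent.

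Granting the local claim, concatenation shows that $L_m$ is obtained from $H$ by a sequence of operations (I), (I'), (II), (II'), (III), (III'). Since $H$ has minimum degree at least $3$, Lemma~\ref{lm:key} upgrades this to a sequence using only (I), (II) and (III). Each of those three operations strictly increases the number of edges, so either the sequence is empty --- in which case $L_m\cong H$ and hence $S(K)=S(L_m)\cong S(H)$, contrary to hypothesis --- or $e(L_m)>e(H)$, whence $|V(S(K))|=\chi(F^2)+e(L_m)>\chi(F^2)+e(H)=|V(S(H))|$. Finally, the reversed sequence $S(K)=G_m\to\cdots\to G_0=S(H)$ is again a sequence of balanced subdivisions and welds (the inverse of a balanced subdivision is a balanced weld and conversely), and $K$ likewise has minimum degree at least $3$; running the same argument with the roles of $H$ and $K$ interchanged gives $|V(S(H))|>|V(S(K))|$, a contradiction. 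Hence $S(K)$ cannot be obtained from $S(H)$ in this way.

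I expect the main obstacle to be the local case analysis underpinning the italicized claim: for each of the four balanced operations and each position of the deleted color class one must identify exactly which one or two of the six bipartite operations is induced and verify the accompanying side conditions. In particular one must check that the balanced \emph{triangle} operations genuinely factor through the pendant-edge moves (I) and (I'); this is precisely why Lemma~\ref{lm:key} has to be proved for all six operations rather than only for (II), (III) and their inverses. A further point worth stressing is that, because $S$ is not injective, the even embedding $L_m$ need not equal $K$: only the equality $e(L_m)=|V(S(K))|-\chi(F^2)=e(K)$ of edge counts is used (and symmetrically on the reversed side).
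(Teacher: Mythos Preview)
Your proposal is correct and takes essentially the same approach as the paper: track the effect of each balanced subdivision or weld on the underlying even embedding, realize it as a combination of the six bipartite moves (I)--(III$'$), invoke Lemma~\ref{lm:key}, and compare edge (equivalently vertex) counts. The only organizational difference is that the paper assumes without loss of generality $e(H)\ge e(K)$ and proves the one-sided inequality $|V(G')|>|V(G)|$ for every $G'\ne G$ reachable from $G=S(H)$, whereas you obtain the contradiction by running the argument in both directions; the local case analysis you outline matches the content of the paper's Figure~\ref{fig:HRB}.
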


\begin{proof}
We may assume $e(H) \geq e(K)$,
and in particular $|V(S(H))| \geq |V(S(K))|$.
Let $G=S(H)$ and let $G' \ne G$ be a balanced triangulation of $F^2$ which can be obtained from $G$ by a sequence of balanced subdivisions and welds.
To prove the desired statement,
it is enough to prove that $|V(G)|<|V(G')|$.

\begin{figure}[htb]
\begin{center}
\input{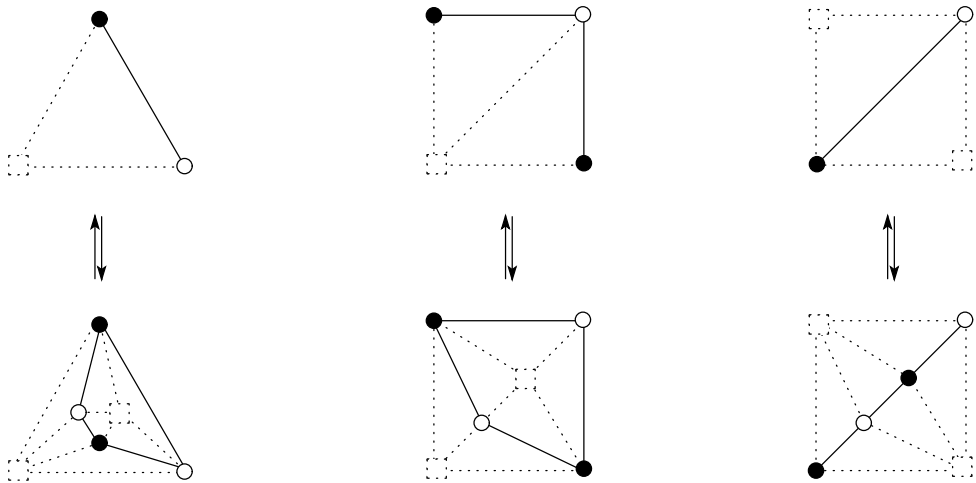}
\end{center}
\caption{Corresponding operations in $H$.}
\label{fig:HRB}
\end{figure}

Since balanced subdivision and welds preserve the balancedness,
there is an even embedding $H'$ of $F^2$ such that $G'=S(H')$ and is obtained from $H$ by a sequence 
of operations shown in Figure~\ref{fig:HRB}
which comes from balanced subdivisions and welds.
Furthermore, it is not difficult to check that 
each operation in Figure~\ref{fig:HRB} can be 
realized by a combination of the operations (I), (II), (III), (I'), (II') and (III').
Then, by Lemma \ref{lm:key},
the bipartite graph $H'$ is 
obtained from $H$ 
by a sequence of operations (I), (II) and (III).
Recall $G=S(H)$ and $G'=S(H')$.
Since $H=H'$ implies $S(H)=S(H')$ and since we assume $G \ne G'$, we have $e(H)<e(H')$,
which proves $|V(G)|<|V(G')|$ as desired.
\end{proof}

\begin{remark}
Face subdivisions $S(H)$ and $S(K)$ could be isomorphic even if $H \ne K$.
Indeed, for a balanced triangulation $G$,
one could obtain $3$ different even embeddings whose face subdivision is $G$ by removing the vertices of one color from $G$.
On the other hand, it is easy to make even embeddings $H$ and $K$ with $S(H)\ne S(K)$.
For example, if $e(H)\ne e(K)$, then we have $|V(S(H))| \ne |V(S(K))|$, and therefore $S(H)\ne S(K)$.
\end{remark}

\begin{remark}
The proof of Theorem \ref{3.1} says that, in the theorem, if we assume $e(H) \geq e(K)$, then
we do not need to assume that $K$ has minimal degree $\geq 3$.
For example, if $S(H)$ is the face subdivision of the cube and $S(K)$ is the octahedral sphere, then $S(K)$ cannot be obtained from $S(H)$ by a sequence of balanced subdivisions and welds.
\end{remark}

\section{Necessary operations for balanced triangulations}\label{sect:nece}
\label{sect:Necessary}

In this section, we discuss how many different types of cross-flips are necessary.
We first introduce operations called an $N$-{\em flip\/} and 
a $P_2$-{\em flip\/} originally defined in \cite{NSS}, as shown in 
Figure~\ref{fig:NandP}. 
(An $N$-flip is also found in cross-flips in \cite[Figure 1]{IKN}.)
Note that it is not allowed to make a double edge by the operations and each triangle in Figure~\ref{fig:NandP}
must be a face. 
Using those operations, Kawarabayashi et al.\ \cite{KNS} proved 
the following theorem. 

\begin{figure}[htb]
\begin{center}
\input{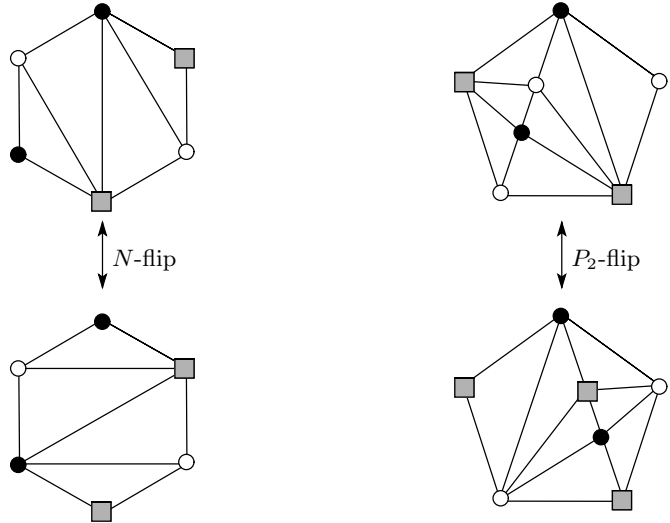}
\end{center}
\caption{$N$-flip and $P_2$-flip.}
\label{fig:NandP}
\end{figure}

\begin{theorem}[Kawarabayashi, Nakamoto and Suzuki \cite{KNS}]\label{thm:KNS}
For any closed surface $F^2$, there exists an integer $M$ such that any two 
balanced triangulations $G$ and $G'$ on $F^2$ with 
$|V(G)|=|V(G')|\geq M$ can be transformed into each other by a sequence 
of $N$- and $P_2$-flips.
\end{theorem}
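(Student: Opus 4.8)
The plan is to prove connectivity within each sufficiently large vertex-count class by a two-level argument: first obtain a coarse connectivity result that also permits moves changing the number of vertices, and then eliminate those moves whenever the endpoints have equal vertex count. For the coarse step I would invoke a Pachner-type result --- the cross-flip theorem of \cite{IKN}, or the classical stellar-subdivision connectivity \cite{Al,Li} adapted to the balanced setting --- to conclude that any two balanced triangulations of $F^2$ are connected by a finite list of local balanced moves. These moves split into two kinds: those preserving the number of vertices, which I would show decompose into $N$- and $P_2$-flips, and vertex-\emph{adding} and vertex-\emph{removing} balanced moves. Since $N$-flips and $P_2$-flips themselves preserve the vertex count, the content of the theorem is exactly to trade away the vertex-changing moves.

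The crux is a vertex-migration lemma: in any balanced triangulation of $F^2$ with at least $M$ vertices, the small local configuration created by a vertex-adding move can be transported to any prescribed face using only $N$- and $P_2$-flips. I would prove this by checking that a single $N$- or $P_2$-flip pushes such a configuration across one edge into a neighbouring region, and then concatenating these elementary pushes along a path on the surface. The threshold $M$ enters precisely here: one needs enough room around the migrating configuration for every elementary push to be legal --- that is, to avoid creating a multiple edge and to respect the balanced $3$-colouring --- and it is the finitely many small triangulations lacking this room that force the hypothesis $|V(G)|\ge M$.

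With migration available, the endgame is a cancellation argument. Fix a coarse sequence transforming $G$ into $G'$. Because $|V(G)|=|V(G')|$, the number of vertex-adding moves equals the number of vertex-removing moves, so I can pair them. A typical pair consists of an adding move creating a configuration at one site, a block of intervening flips, and a later removing move deleting a configuration at another site. Using the migration lemma I would replace the intervening flips by a flip-only path that carries the newly created configuration to the removal site, after which the adding and removing moves are inverse to one another and cancel. Processing the pairs from innermost to outermost converts the whole sequence into $N$- and $P_2$-flips, which is the desired conclusion.

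The main obstacle is the vertex-migration lemma together with the determination of $M$. Establishing that each elementary push is realizable demands a careful case analysis of the local neighbourhood of the configuration being moved, ruling out the short cycles and incipient double edges that would block a flip; and because a migration path may be topologically nontrivial, one must ensure the pushes remain compatible with the embedding on $F^2$ throughout. This interaction with the surface topology is what makes $M$ genuinely depend on $F^2$ and is the technically delicate heart of the proof.
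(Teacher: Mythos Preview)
The paper does not prove this statement at all: Theorem~\ref{thm:KNS} is quoted from \cite{KNS} as an external input and is used as a black box in the proofs of Theorems~\ref{thm:3operation} and~\ref{finalthm}. So there is no ``paper's own proof'' to compare your proposal against.

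As for the proposal itself, it is a strategy outline rather than a proof, and you identify this honestly: the entire content of the theorem is packed into your vertex-migration lemma, which you do not prove. Two remarks are in order. First, your coarse step appeals to \cite{IKN}; that is logically permissible, but note that none of the six basic cross-flips in Figure~\ref{fig:6defo} preserves the vertex count, so the decomposition you describe (vertex-preserving moves that ``decompose into $N$- and $P_2$-flips'' versus vertex-changing moves) does not arise in the obvious way from that list --- every move in a cross-flip sequence is vertex-changing, and you would be cancelling \emph{all} of them, not just a few stray ones. Second, the migration-and-cancellation scheme requires that after you relocate a newly created configuration, the later removing move deletes \emph{exactly} that configuration; but the removing move in the original sequence acts on whatever configuration happens to be at its site, which need not be the one you just migrated there. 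Making the pairing and the relocation compatible with the intervening flips is more delicate than ``process from innermost to outermost'' suggests.

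For comparison, the argument actually given in \cite{KNS} does not proceed by cancellation. It goes through the correspondence between balanced triangulations and even embeddings (face subdivisions, as in Section~\ref{sect:proof} here): one reduces an arbitrary balanced triangulation with enough vertices, using $N$- and $P_2$-flips, to a \emph{standard} balanced triangulation of $F^2$ with the same number of vertices, and then shows that the standard triangulations of a given size are themselves connected by these flips. The threshold $M$ comes from the reduction step, which needs enough vertices to carry out the normalising flips without creating multiple edges. This ``reduce to canonical form'' paradigm is structurally different from your ``connect by a bigger move set, then cancel'' paradigm, and it sidesteps the migration lemma entirely.
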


We now prove Theorem \ref{thm:3operation} in the introduction, saying that BE-subdivisions, BE-welds and P-contractions are enough.

\begin{figure}[tb]
\begin{center}
\input{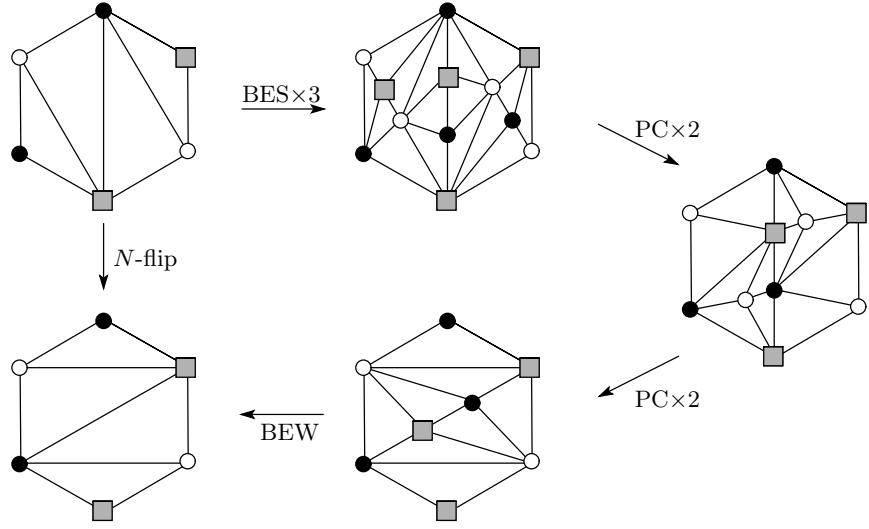}
\end{center}
\caption{An $N$-flip realized by a sequence of other operations.}
\label{fig:NN}
\end{figure}

\begin{proof}[Proof of Theorem~\ref{thm:3operation}]
Clearly, a $P_2$-flip can be replaced with a 
combination of a BE-subdivision and a BE-weld. 
Furthermore, an $N$-flip is replaced with a sequence of 
BE-subdivisions, P-contractions 
and a single BE-weld, as shown in Figure~\ref{fig:NN}. 
Since a BE-subdivision increases the number of the vertices by two and a P-contraction decreases the number of the vertices by one, the desired assertion follows from Theorem~\ref{thm:KNS}.
\end{proof}

Next, we show that most balanced triangulations of a fixed closed surface $F^2$ are connected by a sequence of P-contractions and P-splittings.
The following simple fact can be observed from Figure~\ref{fig:sph}.

\begin{lemma}
\label{lemX}
Let $G$ and $G'$ be balanced triangulations of a closed surface $F^2$ such that $G'$ is obtained from $G$ by applying the BE-subdivision to the edge $v_0v_1$ in $G$.
Let $xv_0v_1$ and $yv_0v_1$ be the faces of $G$ that contains $v_0v_1$ and let $u \ne v_0$ be the vertex such that  $xv_1u$ is a face of $G$.
If $uy$ is not an edge of $G$, then $G'$ is obtained from $G$ by a sequence of P-splittings.
\end{lemma}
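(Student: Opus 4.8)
The plan is to realize the single BE-subdivision at $v_0v_1$ as a composition of exactly two P-splittings; this is forced to be the shortest possible such sequence, because a BE-subdivision creates two vertices while each P-splitting creates exactly one. I will work with the local picture of $G$ near $v_0v_1$: the edge $v_0v_1$ lies in the two faces $xv_0v_1$ and $yv_0v_1$, the edge $xv_1$ lies in the two faces $xv_0v_1$ and $xv_1u$, and, by hypothesis, $y$ and $u$ are non-adjacent. I also assume $u\notin\{v_0,v_1,x,y\}$; the only borderline case is $u=y$, in which ``$uy$ is not an edge of $G$'' is vacuous, and that case does not occur where the lemma is applied. Recall that $G'$ is obtained from $G$ by replacing the edge $v_0v_1$ with a path through two new vertices $p$ and $q$, where $p$ is joined to $v_0,q,x,y$ and $q$ is joined to $p,v_1,x,y$, while every face of $G$ other than $xv_0v_1$ and $yv_0v_1$ --- in particular $xv_1u$ --- survives.

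First I would apply a P-splitting with apex $v_1$ to the (fan-triangulated) pentagon bounded by the $5$-cycle on $v_1,y,v_0,x,u$, whose three triangles $v_1yv_0$, $v_1v_0x$ and $v_1xu$ are all faces of $G$. This move introduces the new vertex $p$, joined to $y,v_0,x,u$; it deletes the two edges $v_1v_0$ and $v_1x$; and it adds the edge $yu$. Its only substantive legality requirement is that $yu$ not already be an edge of $G$, which is exactly the hypothesis. (That the five listed vertices are distinct, that $v_1$ has degree at least $4$, and that the three triangles really form an embedded fan --- i.e.\ $v_1yu$ is not a face of $G$ --- all follow at once from $u\notin\{v_0,v_1,x,y\}$ together with $yu\notin E(G)$.) Being a P-splitting, this move produces another balanced triangulation, call it $G_1$; note that $v_1x\notin E(G_1)$ and that $p$ is joined in $G_1$ to $y,v_0,x,u$.

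Next I would apply a P-splitting with apex $u$ to the pentagon bounded by the $5$-cycle on $u,v_1,y,p,x$, whose three triangles $uv_1y$, $uyp$ and $upx$ are precisely the faces that the first move has just created around $u$. This introduces the new vertex $q$, joined to $v_1,y,p,x$; it deletes the edges $uy$ and $up$; and it adds back the edge $v_1x$. Its legality requires $v_1x\notin E(G_1)$, which holds because the first move deleted $v_1x$. Let $G_2$ be the result. Since both moves are supported inside a neighbourhood of the three faces $xv_0v_1$, $yv_0v_1$, $xv_1u$, the rest of the triangulation is untouched, so it remains only to match face lists: the faces of $G_2$ in this region are $v_0py$, $v_0px$, $pqx$, $pqy$, $qv_1x$, $qv_1y$, $xv_1u$, and this is exactly what the BE-subdivision produces, with $p$ and $q$ the two subdivision vertices. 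Hence $G_2=G'$, and Figure~\ref{fig:sph} is precisely this two-step picture.

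The step I expect to be the actual point of the lemma is spotting these two P-splittings: moves supported only on the two faces incident to $v_0v_1$ do not suffice, and the trick is to bring in the third face $xv_1u$ and apex the first P-splitting at $v_1$, so that it erases the edge $v_1x$ --- this erasure being exactly what makes the second P-splitting (apexed at $u$, which restores $v_1x$) legal. Once the two moves are chosen, everything else is local bookkeeping: the two ``no double edge'' conditions reduce to $uy\notin E(G)$ and to the observation that the first move deleted $v_1x$, and the equality $G_2=G'$ can be read off the figure.
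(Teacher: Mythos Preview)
Your proof is correct and follows exactly the construction depicted in the paper's Figure~\ref{fig:sph}. The paper gives no written argument at all --- it simply states that the lemma ``can be observed from Figure~\ref{fig:sph}'' --- and your two P-splittings (first at $v_1$ across the fan $v_1yv_0,\,v_1v_0x,\,v_1xu$, then at $u$ across the fan $uv_1y,\,uyp,\,upx$) are precisely the two moves drawn there, with the legality conditions and the final face list checked explicitly.
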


\begin{figure}[tb]
\begin{center}
\input{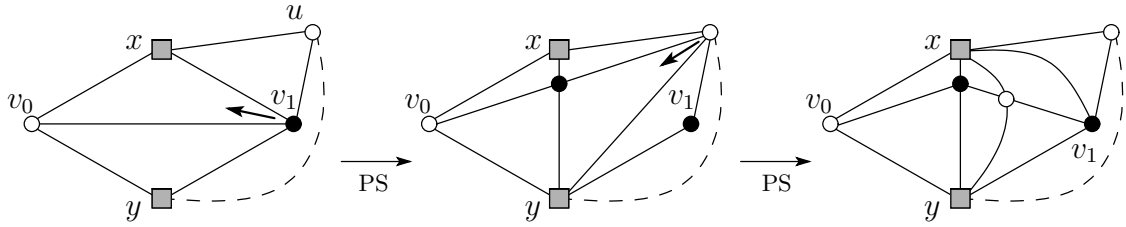}
\end{center}
\caption{Two PSs corresponding to a BES.}
\label{fig:sph}
\end{figure}

\begin{theorem} \label{finalthm}
Any two balanced triangulations of a closed surface $F^2$ 
other than finite 
exceptions (depending on $F^2$) can be 
transformed into each other by a sequence of P-splittings and 
P-contractions. 
\end{theorem}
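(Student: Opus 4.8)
The plan is to combine Theorem~\ref{thm:KNS} with the decompositions of $N$- and $P_2$-flips used in the proof of Theorem~\ref{thm:3operation}, and to replace, wherever it can be set up, a BE-subdivision by a short sequence of P-splittings and a BE-weld by a short sequence of P-contractions by means of Lemma~\ref{lemX}. The balanced triangulations for which this replacement cannot be arranged will be shown to form a finite set, and these become the exceptions.

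\emph{Large triangulations of equal size.} First I would prove that there is an integer $M_0=M_0(F^2)$ such that any two balanced triangulations $G,G'$ of $F^2$ with $|V(G)|=|V(G')|\geq M_0$ are connected by P-splittings and P-contractions. By Theorem~\ref{thm:KNS} there is a sequence of $N$-flips and $P_2$-flips from $G$ to $G'$; by the proof of Theorem~\ref{thm:3operation} (Figure~\ref{fig:NN}) every $N$-flip decomposes into BE-subdivisions, P-contractions and one BE-weld, and every $P_2$-flip into a BE-subdivision and a BE-weld. Since the vertex count stays large throughout, each BE-subdivision in this decomposition can be arranged to satisfy the hypothesis of Lemma~\ref{lemX} at the relevant edge $v_0v_1$: if the vertices $u,y$ of that lemma are adjacent, first insert an auxiliary P-splitting at a face incident to $v_1$ that separates them, then apply Lemma~\ref{lemX} to realize the BE-subdivision by two P-splittings, and finally delete the auxiliary vertex by a P-contraction. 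Reading Figure~\ref{fig:sph} in reverse turns every BE-weld into two P-contractions. Hence $G$ and $G'$ are connected by P-splittings and P-contractions.

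\emph{From equal size to all large, and the small cases.} Since a P-splitting changes the number of vertices by $1$, and the first P-splitting of Figure~\ref{fig:sph} is applicable to any balanced triangulation with at least $M_0$ vertices (enlarging $M_0$ if necessary, by the same local analysis as above), one may insert P-splittings to match vertex counts while staying large; the previous paragraph then shows that all balanced triangulations of $F^2$ with at least $M_0$ vertices lie in a single P-splitting/P-contraction equivalence class, which I call $\mathcal{C}$. Let $\mathcal{E}$ be the set of balanced triangulations of $F^2$ that are not in $\mathcal{C}$. Every element of $\mathcal{E}$ has fewer than $M_0$ vertices, and $F^2$ carries only finitely many balanced triangulations with fewer than $M_0$ vertices, so $\mathcal{E}$ is finite. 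Finally, any two balanced triangulations of $F^2$ outside $\mathcal{E}$ both lie in $\mathcal{C}$, hence are connected by a sequence of P-splittings and P-contractions, which is the assertion of the theorem.

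\emph{Main obstacle.} The delicate step is the preconditioning used to force the hypothesis of Lemma~\ref{lemX}: one must check, uniformly in the surface, that a bounded number of auxiliary P-splittings always suffices to break the adjacency $uy$ without creating a double edge or destroying a face that the subsequent operation needs, and that after applying Lemma~\ref{lemX} these auxiliary vertices can indeed be removed by P-contractions, with the vertex count kept above the threshold required for the next such step. A subsidiary point is confirming that the first P-splitting of Figure~\ref{fig:sph} is genuinely available in every large triangulation, so that the size-matching argument goes through; for the $2$-sphere, the sharper statement that $\mathcal{E}$ consists of the octahedral sphere alone is Theorem~\ref{thm:spheremain}.
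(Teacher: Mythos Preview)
Your overall architecture matches the paper's: reduce to $N$- and $P_2$-flips via Theorem~\ref{thm:KNS}, then replace each of those by P-moves using Lemma~\ref{lemX}, and handle the size-matching by P-splittings. The difference is in how the hypothesis of Lemma~\ref{lemX} is secured, and here you have manufactured a difficulty that the paper avoids entirely.

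You treat ``$uy$ is not an edge'' as a condition that may fail and must be forced by auxiliary P-splittings, and you flag this preconditioning as the main obstacle without actually carrying it out. The paper's point is that no such preconditioning is ever needed: one checks directly from the local picture in Figure~\ref{fig:NN} that \emph{each} BE-subdivision and BE-weld occurring in that particular decomposition of the $N$-flip already satisfies the hypothesis of Lemma~\ref{lemX}, and likewise for the BES/BEW pair realizing a $P_2$-flip. The relevant $u$ and $y$ live inside the hexagonal (resp.\ pentagonal) patch, and the required non-adjacency is visible there. So every $N$-flip and every $P_2$-flip is, without further work, a composite of P-splittings and P-contractions.

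This observation also streamlines your size-matching step. You argue that a P-splitting is available in any sufficiently large triangulation ``by the same local analysis,'' again deferring the verification. The paper instead notes that once an $N$- or $P_2$-flip is applicable to $G$ (which Theorem~\ref{thm:KNS} guarantees for $|V(G)|\geq M$, since there is more than one balanced triangulation of each large size), the first P-splitting in its replacement sequence is already applicable to $G$. Iterating raises the vertex count to $|V(G')|$, and then Theorem~\ref{thm:KNS} plus the replacement finishes.

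In short: your proposal is on the right track but leaves its self-declared ``delicate step'' open, whereas the paper closes it by the one-line observation that the specific BES/BEW appearing in Figure~\ref{fig:NN} (and in the $P_2$-flip decomposition) automatically meet the hypothesis of Lemma~\ref{lemX}. Once you make that check, the auxiliary moves and the attendant worries about double edges, face destruction, and vertex-count thresholds disappear.
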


\begin{proof}
First, observe that each of BE-subdivisions and BE-welds applied in 
Figure~\ref{fig:NN} 
satisfies the assumption of Lemma~\ref{lemX}.
Hence any $N$-flip can be replaced by a sequence of P-splittings and P-contractions.
Similarly, a $P_2$-flip is replaced with a combination of 
P-splittings and P-contractions by Lemma~\ref{lemX}. 
This observation also implies that if 
we can apply either an $N$-flip or a $P_2$-flip to a 
balanced triangulation, then we can apply a P-splitting.

Now, let $G$ and $G'$ be balanced triangulations of $F^2$ 
with $|V(G')| \geq |V(G)| \geq M$ where $M$ is the 
integer obtained in Theorem~\ref{thm:KNS}. 
By Theorem~\ref{thm:KNS}, $G$ can be transformed into 
another balanced triangulation with the same number of vertices 
by a sequence of $N$- and $P_2$-flips. 
Note that this implies that we can apply a P-splitting 
to $G$.
After applying a P-splitting to $G$, we obtain a balanced 
triangulation of $F^2$ with $|V(G)|+1$ vertices. 
We can repeat the argument until the number of vertices 
becomes $|V(G')|$; denote the resulting graph by $G_0$. 
By Theorem~\ref{thm:KNS} and the above argument 
$G_0$ and $G'$ can be transformed into each other by a sequence 
of P-splittings and P-contractions. 
Therefore, we conclude that $G$ and $G'$ are connected 
by only P-splittings and P-contractions. 
Then the assertion follows since there exist only finitely many balanced triangulations 
of $F^2$ with the number of vertices less than $M$.
\end{proof}

It would be natural to ask what are the exceptions in Theorem~\ref{finalthm}.
Let $F^2$ be a closed surface and let $M$ be an integer given in Theorem~\ref{thm:KNS}.
The proof of the Theorem~\ref{finalthm} says that two balanced triangulations are connected by a sequence of P-splittings and P-contractions if they have at least $M$ vertices.
We say that a balanced triangulation $G$ of $F^2$ is \textit{exceptional} if $G$ cannot be connected to a balanced triangulation $G'$ of $F^2$ with $|V(G')|\geq M$ by a sequence of $P$-splittings and P-contractions (this condition does not depend on a choice of $M$).
If we can apply a P-splitting to $G$, that is, there is a graph $G'$ such that $G'$ is obtained from $G$ by a P-splitting, then we can again apply a P-splitting to $G'$.
Thus if we can apply a P-splitting to $G$, then $G$ is not exceptional.
Also, if it is possible to apply a $P$-contraction to $G$, then it is also possible to apply a $P$-splitting to $G$.
Thus we have the following criterion.

\begin{proposition}
\label{thm:exceptional}
A balanced triangulation $G$ is not exceptional if and only if $G$ have faces $vwx,vxy,vyz$ such that 
$wz$ is not an edge of $G$.
\end{proposition}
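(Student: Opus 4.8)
The plan is to derive the statement from the facts assembled in the paragraph just before it, together with one local observation. I record the three facts I will use: (i) if $G$ admits a P-splitting then $G$ is not exceptional; (ii) if $G$ admits a P-contraction then $G$ admits a P-splitting; and (iii) every balanced triangulation of $F^2$ with at least $M$ vertices admits a P-splitting, which is exactly what the proof of Theorem~\ref{finalthm} verifies (via Theorem~\ref{thm:KNS}, such a triangulation is joined to another of the same size by $N$- and $P_2$-flips, and each of those flips makes a P-splitting available). The new ingredient I would isolate first is the local characterization: inspecting the pentagon splitting in Figure~\ref{fig:6defo}, a P-splitting can be applied to $G$ if and only if $G$ has three faces $vwx$, $vxy$, $vyz$ around a common vertex $v$ with $wz\notin E(G)$, the non-adjacency condition being precisely what keeps the move from producing a double edge.

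With this in hand, the ``if'' direction is immediate: given such faces, $G$ admits a P-splitting, hence is not exceptional by (i). For the ``only if'' direction I would argue by cases on $|V(G)|$. If $G$ is not exceptional then its connected component under P-splittings and P-contractions contains some $G'$ with $|V(G')|\ge M$. When $|V(G)|\ge M$, fact (iii) already gives that $G$ admits a P-splitting. When $|V(G)|<M$, the triangulation $G'$ differs from $G$, so there is a nonempty sequence of moves starting at $G$; its first move either is a P-splitting applied to $G$, or is a P-contraction applied to $G$, and in the latter case (ii) still yields a P-splitting applicable to $G$. Either way $G$ admits a P-splitting, and the local characterization then produces the required three faces.

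I do not expect a serious obstacle: the argument is essentially a bookkeeping of already-proved statements. The only place that needs a moment's care is fact (iii) --- one must be sure that a large balanced triangulation is never ``frozen'' under the two moves --- but this is handled inside the proof of Theorem~\ref{finalthm}, so nothing new is required there. Writing down the local characterization of P-splittings precisely (including the coloring check that $w$ and $z$ automatically receive different colors, so that $wz$ could a priori be an edge and the hypothesis $wz\notin E(G)$ is genuinely a condition) is the one step I would be careful to state in full.
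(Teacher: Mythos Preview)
Your proposal is correct and matches the paper's intended argument: the proposition is stated there as an immediate consequence of the two observations you label (i) and (ii), together with the local description of when a P-splitting is applicable. One small simplification is available for the ``only if'' direction: since the paper notes that the definition of \emph{exceptional} does not depend on the choice of $M$, you may simply take $M>|V(G)|$ and thereby force $G\neq G'$, which collapses your two cases into one and makes fact (iii) unnecessary.
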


We thinks that exceptional balanced triangulations are quite rare.
Indeed, for the $2$-sphere we have the following result, which proves Theorem \ref{thm:spheremain}.

\begin{theorem}\label{thm:spheremainlater}
The octahedral sphere is the only exceptional balanced triangulation of the $2$-sphere.
\end{theorem}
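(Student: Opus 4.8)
The plan is to deduce everything from the criterion of Proposition~\ref{thm:exceptional}: a balanced triangulation $G$ is exceptional if and only if, for every vertex $v$ and every triple of consecutive faces $vwx, vxy, vyz$ around $v$, the pair $wz$ is an edge of $G$; equivalently, writing $v_1,\dots,v_{2m}$ for the link cycle of $v$ read cyclically, $v_iv_{i+3}\in E(G)$ for all $i$. Before using this I would record a parity observation: since $G$ is $3$-coloured and $v$ has colour $c(v)$, every face at $v$ has the form $vv_iv_{i+1}$ with $\{c(v_i),c(v_{i+1})\}$ equal to the two colours different from $c(v)$; hence consecutive vertices of $\mathrm{lk}(v)$ receive the two colours $\ne c(v)$ alternately, so $\mathrm{lk}(v)$ has even length, and every vertex of a balanced triangulation has even degree, in particular degree $\ge 4$. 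Thus the theorem reduces to two assertions: (a) every balanced triangulation of $S^2$ other than the octahedral sphere is \emph{not} exceptional, and (b) the octahedral sphere \emph{is} exceptional.

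For (a), suppose $G$ is an exceptional balanced triangulation of $S^2$ and that some vertex $v$ has degree $2m\ge 6$, with link cycle $C=v_1v_2\cdots v_{2m}$. By the criterion, $v_1v_4$ and $v_2v_5$ are edges of $G$, and since $2m\ge 6$ the four vertices $v_1,v_2,v_4,v_5$ are distinct. The cycle $C$ bounds two closed disks in $S^2$; the one containing $v$ is the closed star of $v$, a disk whose interior contains only $v$ and the (relative interiors of the) edges $vv_1,\dots,vv_{2m}$, together with the open triangles $vv_iv_{i+1}$. Since $v_1v_4$ and $v_2v_5$ are edges of $G$ that are neither edges of $C$ nor incident to $v$, their relative interiors avoid the closed star, so both edges lie in the closure of the other disk $D$. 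But along the boundary circle $C$ the endpoints interleave — $v_1,v_2,v_4,v_5$ occur in this cyclic order — so the two edges must intersect inside $D$, contradicting that $G$ is embedded with simple edges. Hence an exceptional balanced triangulation of $S^2$ is $4$-regular. A $4$-regular triangulation of $S^2$ has, by Euler's formula together with $2e(G)=4|V(G)|$ and $3f(G)=2e(G)$, exactly $|V(G)|=6$ vertices; then each vertex is adjacent to $4$ of the remaining $5$, so the non-adjacency relation is a perfect matching, $G=K_6$ minus a perfect matching $=K_{2,2,2}$, the octahedral sphere. This proves (a).

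For (b), in the octahedral sphere every vertex has degree $4$, so its link is a $4$-cycle $abcd$; for each of the four triples of consecutive faces around the vertex, the corresponding pair $\{w,z\}$ is an edge of that $4$-cycle, so by the criterion the octahedral sphere is exceptional. Combining (a) and (b) gives Theorem~\ref{thm:spheremainlater}.

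The main obstacle is the degree-reduction step in (a): one must argue with care, using that $G$ is embedded in $S^2$, why the two ``short chords'' $v_1v_4$ and $v_2v_5$ of the link cycle cannot coexist as non-crossing edges of $G$ — this is where planarity is essential and where a careless argument could go wrong. The parity of degrees, the Euler-formula count, the identification of the $4$-regular spherical triangulation, and the verification for the octahedron are all routine.
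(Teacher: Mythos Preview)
Your proof is correct and follows the same overall strategy as the paper: invoke Proposition~\ref{thm:exceptional} to show that in an exceptional balanced triangulation of $S^2$ every vertex must have degree~$4$, and then identify the unique $4$-regular triangulation of $S^2$ as the octahedron. The paper likewise fixes a vertex $v$, reads off from the criterion two extra edges among the link vertices, and uses planarity to force the link to have length~$4$.

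The only substantive difference is how planarity is applied. You pick interleaving chords $v_1v_4$ and $v_2v_5$ of the link cycle and argue directly that they would have to cross in the complementary disk. The paper instead observes that the two forced edges, together with the spokes and link edges, produce a $K_{3,3}$ on $\{v,x,y\}$ versus $\{u,z,w\}$ unless two of the latter coincide; planarity then rules out the $K_{3,3}$. Your Jordan-curve argument is slightly more hands-on and avoids citing the non-planarity of $K_{3,3}$, while the paper's version packages the contradiction more succinctly once one is willing to quote that fact; neither approach has a real advantage over the other.
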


\begin{proof}
Let $G$ be an exceptional balanced triangulation of the $2$-sphere.
Since the octahedral sphere is the only triangulation of the $2$-sphere all whose vertices have degree $4$,
it suffices to show that every vertex of $G$ has degree $4$.

Let $v$ be a vertex of $G$ and $uv$ an edge of $G$.
We claim that $v$ has degree $4$.
Let $uvx$ and $uvy$ be the faces of $G$ that contains $uv$.
Also, let $z \ne u$ and $w \ne u$ be the vertices such that $vxz$ and $vwy$ are faces of $G$.
Note that $z \ne y$ since they have different colors, and similarly $w \ne x$.
By applying Lemma \ref{lemX} to faces $vxz,uvx$ and $uvy$,
we have that $yz$ must be an edge of $G$.
Similarly, by applying Lemma \ref{lemX} to faces $vwy, uvy,uvx$, we have that $xw$ must be an edge of $G$.
Then, since $G$ does not contains the complete bipartite graph of size $3$ by the planarity, $u$ must be equal to $w$, which implies that $v$ has degree $4$ as desired (see Figure~\ref{fig:sherenew}).
\end{proof}

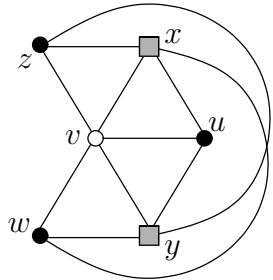
\begin{figure}[tb]
\begin{center}
{\unitlength 0.1in%
\begin{picture}( 13.7200, 14.3600)( 32.8000,-21.1300)%
\put(33.3000,-10.1000){\makebox(0,0)[lb]{$z$}}%
%
\special{sh 1.000}%
\special{ia 4310 1380 40 40  0.0000000  6.2831853}%
\special{pn 8}%
\special{ar 4310 1380 40 40  0.0000000  6.2831853}%
%
\special{sh 1.000}%
\special{ia 3450 890 40 40  0.0000000  6.2831853}%
\special{pn 8}%
\special{ar 3450 890 40 40  0.0000000  6.2831853}%
%
\special{sh 1.000}%
\special{ia 3450 1890 40 40  0.0000000  6.2831853}%
\special{pn 8}%
\special{ar 3450 1890 40 40  0.0000000  6.2831853}%
%
\special{pn 8}%
\special{pa 3450 1900}%
\special{pa 4020 1900}%
\special{fp}%
%
\special{pn 8}%
\special{pa 3450 900}%
\special{pa 4020 900}%
\special{fp}%
%
\special{pn 8}%
\special{pa 3450 900}%
\special{pa 3740 1380}%
\special{fp}%
%
\special{pn 8}%
\special{pa 3750 1400}%
\special{pa 4040 1880}%
\special{fp}%
%
\special{pn 8}%
\special{pa 4030 900}%
\special{pa 4320 1380}%
\special{fp}%
%
\special{pn 8}%
\special{pa 3450 1850}%
\special{pa 3740 1370}%
\special{fp}%
%
\special{pn 8}%
\special{pa 3750 1350}%
\special{pa 4040 870}%
\special{fp}%
%
\special{pn 8}%
\special{pa 4030 1860}%
\special{pa 4320 1380}%
\special{fp}%
%
\special{pn 8}%
\special{pa 3760 1380}%
\special{pa 4330 1380}%
\special{fp}%
%
\special{pn 8}%
\special{pa 3450 900}%
\special{pa 3478 883}%
\special{pa 3507 865}%
\special{pa 3563 831}%
\special{pa 3592 815}%
\special{pa 3620 799}%
\special{pa 3678 769}%
\special{pa 3707 755}%
\special{pa 3736 742}%
\special{pa 3766 729}%
\special{pa 3796 718}%
\special{pa 3826 708}%
\special{pa 3856 699}%
\special{pa 3887 692}%
\special{pa 3917 686}%
\special{pa 3949 681}%
\special{pa 3980 678}%
\special{pa 4012 677}%
\special{pa 4045 677}%
\special{pa 4077 679}%
\special{pa 4109 683}%
\special{pa 4141 688}%
\special{pa 4173 694}%
\special{pa 4205 702}%
\special{pa 4237 712}%
\special{pa 4268 723}%
\special{pa 4298 736}%
\special{pa 4327 750}%
\special{pa 4356 766}%
\special{pa 4384 783}%
\special{pa 4411 801}%
\special{pa 4437 821}%
\special{pa 4462 842}%
\special{pa 4485 865}%
\special{pa 4508 889}%
\special{pa 4529 913}%
\special{pa 4548 940}%
\special{pa 4566 967}%
\special{pa 4583 995}%
\special{pa 4598 1024}%
\special{pa 4611 1054}%
\special{pa 4622 1085}%
\special{pa 4632 1117}%
\special{pa 4640 1149}%
\special{pa 4646 1183}%
\special{pa 4650 1216}%
\special{pa 4652 1250}%
\special{pa 4653 1284}%
\special{pa 4651 1319}%
\special{pa 4648 1353}%
\special{pa 4644 1387}%
\special{pa 4637 1420}%
\special{pa 4629 1453}%
\special{pa 4619 1485}%
\special{pa 4608 1517}%
\special{pa 4595 1548}%
\special{pa 4581 1577}%
\special{pa 4565 1606}%
\special{pa 4547 1633}%
\special{pa 4528 1658}%
\special{pa 4508 1682}%
\special{pa 4486 1704}%
\special{pa 4462 1724}%
\special{pa 4438 1743}%
\special{pa 4412 1760}%
\special{pa 4385 1776}%
\special{pa 4357 1790}%
\special{pa 4328 1803}%
\special{pa 4298 1815}%
\special{pa 4268 1826}%
\special{pa 4237 1837}%
\special{pa 4173 1855}%
\special{pa 4140 1864}%
\special{pa 4041 1888}%
\special{pa 4030 1890}%
\special{fp}%
%
\special{pn 8}%
\special{pa 3440 1890}%
\special{pa 3468 1907}%
\special{pa 3497 1925}%
\special{pa 3553 1959}%
\special{pa 3582 1975}%
\special{pa 3610 1991}%
\special{pa 3668 2021}%
\special{pa 3697 2035}%
\special{pa 3726 2048}%
\special{pa 3756 2061}%
\special{pa 3786 2072}%
\special{pa 3816 2082}%
\special{pa 3846 2091}%
\special{pa 3877 2098}%
\special{pa 3907 2104}%
\special{pa 3939 2109}%
\special{pa 3970 2112}%
\special{pa 4002 2113}%
\special{pa 4035 2113}%
\special{pa 4067 2111}%
\special{pa 4099 2107}%
\special{pa 4131 2102}%
\special{pa 4163 2096}%
\special{pa 4195 2088}%
\special{pa 4227 2078}%
\special{pa 4258 2067}%
\special{pa 4288 2054}%
\special{pa 4317 2040}%
\special{pa 4346 2024}%
\special{pa 4374 2007}%
\special{pa 4401 1989}%
\special{pa 4427 1969}%
\special{pa 4452 1948}%
\special{pa 4475 1925}%
\special{pa 4498 1901}%
\special{pa 4519 1877}%
\special{pa 4538 1850}%
\special{pa 4556 1823}%
\special{pa 4573 1795}%
\special{pa 4588 1766}%
\special{pa 4601 1736}%
\special{pa 4612 1705}%
\special{pa 4622 1673}%
\special{pa 4630 1641}%
\special{pa 4636 1607}%
\special{pa 4640 1574}%
\special{pa 4642 1540}%
\special{pa 4643 1506}%
\special{pa 4641 1471}%
\special{pa 4638 1437}%
\special{pa 4634 1403}%
\special{pa 4627 1370}%
\special{pa 4619 1337}%
\special{pa 4609 1305}%
\special{pa 4598 1273}%
\special{pa 4585 1242}%
\special{pa 4571 1213}%
\special{pa 4555 1184}%
\special{pa 4537 1157}%
\special{pa 4518 1132}%
\special{pa 4498 1108}%
\special{pa 4476 1086}%
\special{pa 4452 1066}%
\special{pa 4428 1047}%
\special{pa 4402 1030}%
\special{pa 4375 1014}%
\special{pa 4347 1000}%
\special{pa 4318 987}%
\special{pa 4288 975}%
\special{pa 4258 964}%
\special{pa 4227 953}%
\special{pa 4163 935}%
\special{pa 4130 926}%
\special{pa 4031 902}%
\special{pa 4020 900}%
\special{fp}%
\put(35.8000,-14.1000){\makebox(0,0)[lb]{$v$}}%
\put(32.8000,-18.7000){\makebox(0,0)[lb]{$w$}}%
\put(43.3000,-13.4000){\makebox(0,0)[lb]{$u$}}%
\put(41.0000,-8.8000){\makebox(0,0)[lb]{$x$}}%
\put(41.0000,-20.2000){\makebox(0,0)[lb]{$y$}}%
%
\special{pn 0}%
\special{sh 0}%
\special{pa 3970 850}%
\special{pa 4070 850}%
\special{pa 4070 950}%
\special{pa 3970 950}%
\special{pa 3970 850}%
\special{ip}%
\special{pn 8}%
\special{pa 3970 850}%
\special{pa 4070 850}%
\special{pa 4070 950}%
\special{pa 3970 950}%
\special{pa 3970 850}%
\special{pa 4070 850}%
\special{fp}%
%
\special{pn 0}%
\special{sh 0.300}%
\special{pa 3970 850}%
\special{pa 4070 850}%
\special{pa 4070 950}%
\special{pa 3970 950}%
\special{pa 3970 850}%
\special{ip}%
\special{pn 8}%
\special{pa 3970 850}%
\special{pa 4070 850}%
\special{pa 4070 950}%
\special{pa 3970 950}%
\special{pa 3970 850}%
\special{pa 4070 850}%
\special{fp}%
%
\special{pn 0}%
\special{sh 0}%
\special{pa 3970 1840}%
\special{pa 4070 1840}%
\special{pa 4070 1940}%
\special{pa 3970 1940}%
\special{pa 3970 1840}%
\special{ip}%
\special{pn 8}%
\special{pa 3970 1840}%
\special{pa 4070 1840}%
\special{pa 4070 1940}%
\special{pa 3970 1940}%
\special{pa 3970 1840}%
\special{pa 4070 1840}%
\special{fp}%
%
\special{pn 0}%
\special{sh 0.300}%
\special{pa 3970 1840}%
\special{pa 4070 1840}%
\special{pa 4070 1940}%
\special{pa 3970 1940}%
\special{pa 3970 1840}%
\special{ip}%
\special{pn 8}%
\special{pa 3970 1840}%
\special{pa 4070 1840}%
\special{pa 4070 1940}%
\special{pa 3970 1940}%
\special{pa 3970 1840}%
\special{pa 4070 1840}%
\special{fp}%
%
\special{sh 0}%
\special{ia 3740 1380 40 40  0.0000000  6.2831853}%
\special{pn 8}%
\special{ar 3740 1380 40 40  0.0000000  6.2831853}%
\end{picture}}%
\end{center}
\caption{A configuration in the proof of Theorem~\ref{thm:spheremainlater}.}
\label{fig:sherenew}
\end{figure}

We close the paper with a few remarks and one question.

\begin{remark}
In Theorem~\ref{thm:KNS}, it is also true that there is a sequence of $N$-flips and $P_2$-flips
that transform $G$ into $G'$ and a given coloring of $G$ into a given coloring of $G'$ (this can be seen from the first paragraph of the proof of \cite[Theorem 3]{KNS}).
Thus, like \cite[Theorem 1.1]{IKN}, this stronger property is also true in Theorems~\ref{thm:3operation} and \ref{finalthm}.
\end{remark}

\begin{remark}
There is a balanced triangulation of the torus whose underlying graph is the complete tripartite graph $K_{3,3,3}$.
By Proposition \ref{thm:exceptional},
this triangulation is exceptional.
We do not know other examples of exceptional balanced triangulations.
\end{remark}

\begin{remark}
Any two balanced triangulations of a closed surface $F^2$ can be transformed into 
each other by a sequence of BT-subdivisions, BT-welds, P-contractions 
and P-splittings. 
Indeed, Figure \ref{fig:replace} shows that one can replace BE-subdivisions and BE-welds with combinations of BT-subdivisions, BT-welds, P-splittings and P-contractions.
\end{remark}

\begin{figure}[tb]
\begin{center}
\input{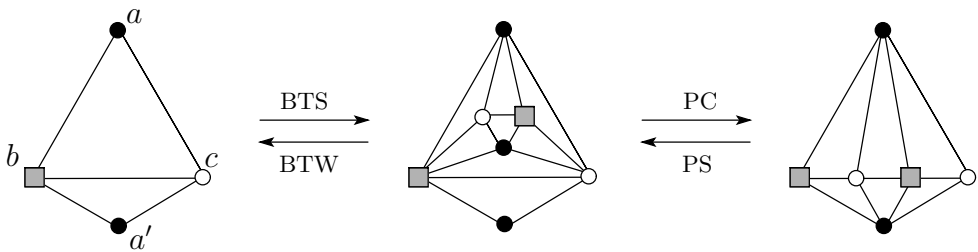}
\end{center}
\caption{Replacement of a BEW and a BES with the other four operations.}
\label{fig:replace}
\end{figure}

\begin{remark}
It was asked in \cite[Problem 4]{IKN} if two even triangulations of the same combinatorial manifold $M$ with the same coloring monodoromy are connected by cross-flips.
Since Theorem \ref{thm:KNS} also holds for even triangulations having the same monodoromy, the answer to this problem is yes for closed surfaces.
Also, Theorems~\ref{thm:3operation} and \ref{finalthm} hold in this generality.
\end{remark}

\begin{question}
Is there a generalization of Theorem~\ref{thm:3operation} (or Theorem~\ref{finalthm}) in higher dimension?
\end{question}

\end{document}